\documentclass[a4paper,12pt]{article}
\usepackage{amsmath,amsfonts,amssymb,amscd}
\usepackage{indentfirst,graphicx,epsfig}
\usepackage{graphicx,psfrag}
\usepackage{amsthm, amsfonts, color}
\usepackage[bookmarksnumbered, plainpages]{hyperref}
\usepackage[numbers,sort&compress]{natbib}
\input{epsf}

\textheight=22cm \textwidth=16cm \topmargin=-0.1cm
\oddsidemargin=0cm \evensidemargin=0cm

\title{Proper connection numbers of \\
complementary graphs\thanks{Supported by NSFC No.11371205 and
PCSIRT.} }
\author{\small{Fei Huang, Xueliang Li, Shujing Wang}\\
{\small  Center for Combinatorics and LPMC-TJKLC}\\
{\small Nankai University, Tianjin 300071, China}\\\makeatletter
{\small Email: huangfei06@126.com; lxl@nankai.edu.cn;  \newcommand\figcaption{\def\@captype{figure}\caption}
wang06021@126.com} }  \newcommand\tabcaption{\def\@captype{table}\caption}
\date{}\makeatother
\textheight=22cm \textwidth=16cm \topmargin=-0.1cm
\oddsidemargin=0cm \evensidemargin=0cm

\newtheorem{theorem}{Theorem}[section]
\newtheorem{defi}{Definition}[section]
\newtheorem{lemma}[theorem]{Lemma}
\newtheorem{pro}[theorem]{Proposition}
\newtheorem{coro}[theorem]{Corollary}

 \begin{document}

\maketitle

\begin{abstract}
  A path $P$ in an edge-colored graph $G$ is called a proper path if no two adjacent edges of $P$ are colored the same, and $G$ is proper connected if every two vertices of $G$ are connected by a proper path in $G$. The proper connection number of a connected graph $G$, denoted by $pc(G)$, is the minimum number of colors that are needed to make $G$ proper connected. In this paper, we investigate the proper connection number of the complement of graph $G$ according to some constraints of $G$ itself. Also, we characterize the graphs on $n$ vertices that have proper connection number $n-2$. Using this result, we give a Nordhaus-Gaddum-type theorem for the proper connection number. We prove that if $G$ and $\overline{G}$ are both connected, then $4\le pc(G)+pc(\overline{G})\le n$, and the only graph attaining the upper bound is the tree with maximum degree $\Delta=n-2$.
\\[2mm]

\noindent{\bf Keywords:} proper path, proper connection number, complement graph, diameter, Nordhaus-Gaddum-type

\noindent{\bf AMS subject classification 2010:} 05C15, 05C40, 05C35
\end{abstract}

\section{Introduction}

In this paper we are concerned with simple connected finite graphs. We
follow the terminology and the notation of Bondy and Murty \cite{Bondy}.
The distance between two vertices $u$ and $v$ in a connected graph $G$, denoted by $dist(u,v)$, is the length of a shortest path between them in $G$. The eccentricity of a vertex $v$ in $G$ is defined as $ecc_G(v)=\max\{x\in V(G): dist(v,x)\}$, and the diameter of $G$ denoted by $diam(G)$ is defined as $diam(G)=\max\{x\in V(G):ecc_G(v)\}$.

An edge coloring of a graph $G$ is an assignment $c$ of colors to the edges of $G$, one color to each edge of $G$. If adjacent edges of $G$ are assigned different colors by $c$, then $c$ is a \emph{proper (edge) coloring}. The minimum number of colors needed in a proper coloring of $G$ is referred to as the \emph{chromatic index} of $G$ and denoted by $\chi'(G)$. A path in an edge-colored graph with no two edges sharing the same color is called a \emph{rainbow path}. An edge-colored graph $G$ is said to be \emph{rainbow connected} if every pair of distinct vertices of $G$ is connected by at least one rainbow path in $G$. Such a coloring is called a \emph{rainbow coloring} of the graph. The minimum number of colors in a rainbow coloring of $G$ is referred to as the \emph{rainbow connection number} of $G$ and denoted by $rc(G)$. The concept of rainbow coloring was first introduced by Chartrand et al. in \cite{Chartrand}. In recent years, the rainbow coloring has been extensively studied and has gotten a variety of nice results, see \cite{Chandran,Chartrand2,Kri,Li,Li3} for examples. For more details we refer to a survey paper \cite{Li1} and a book \cite{Li2}.

Inspired by rainbow colorings and proper colorings in graphs, Andrews et al. \cite{Andrews} introduce the concept of proper-path colorings. Let $G$ be an edge-colored graph, where adjacent edges may be colored the same. A path $P$ in $G$ is called a \emph{proper path} if no two adjacent edges of $P$ are colored the same. An edge-coloring $c$ is a \emph{proper-path coloring} of a connected graph $G$ if every pair of distinct vertices $u,v$ of $G$ is connected by a proper $u-v$ path in $G$. A graph with a proper-path coloring is said to be \emph{proper connected}. If $k$ colors are used, then $c$ is referred to as a \emph{proper-path $k$-coloring}. The minimum number of colors needed to produce a proper-path coloring of $G$ is called the \emph{proper connection number} of $G$, denoted by $pc(G)$.

Let $G$ be a nontrivial connected graph of order $n$ and size $m$.
Then the proper connection number of $G$ has the following apparent bounds:
$$1 \le pc(G) \le \min\{\chi'(G), rc(G)\}\le m.$$ Furthermore, $pc(G)=1$ if and only if $G = K_n$ and $pc(G)=m$ if and only if $G=K_{1,m}$ is a star of size $m$.

Among many interesting problems of determining the proper connection numbers of graphs, it is worth to study the proper connection number of $G$ according to some constraints of the complementary graph. In \cite{Li4}, the authors considered this kind of question for rainbow connection number $rc(G)$.

A Nordhaus-Gaddum-type result is a (tight) lower or upper bound on the sum
or product of the values of a parameter for a graph and its complement. The name ``Nordhaus-Gaddum-type" is given because Nordhaus and Gaddum \cite{Nord} first established the type of inequalities for the chromatic number of graphs in 1956. They proved that if $G$ and $\overline{G}$ are complementary graphs on $n$ vertices whose chromatic numbers are $\chi(G)$ and $\chi(\overline{G})$, respectively, then $2\sqrt{n}\le \chi(G)+ \chi(\overline{G})\le n+1$. Since then, many analogous inequalities of other graph parameters have been considered, such as diameter \cite{HFR}, domination number \cite{HFT}, rainbow connection number \cite{CLLL,CLL}, generalized edge-connectivity \cite{LM}, and so on.

The rest of this paper is organised as follows: In section $2$, we list some important known results on proper connection number. In section $3$, we investigate the proper connection number of the complement of graph $G$ according to some constraints of $G$ itself. In section $4$, we first characterize the graphs on $n$ vertices that have proper connection number $n-2$. Using this result, we give a Nordhaus-Gaddum-type theorem for the proper connection number. We prove that if $G$ and $\overline{G}$ are both connected, then $4\le pc(G)+pc(\overline{G})\le n$, and the only graph that attaining the upper bound is the tree with maximum degree $\Delta=n-2$.
\section{Preliminaries}

At the beginning of this section, we list some fundamental results on proper-path coloring which can be found in \cite{Andrews}.

\begin{lemma}\label{lem2.1} If $G$ is a nontrivial connected graph and $H$ is a connected spanning subgraph of $G$, then $pc(G)\le pc(H)$. In particular, $pc(G)\le pc(T)$ for every spanning tree $T$ of $G$.
\end{lemma}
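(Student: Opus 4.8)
The plan is to show that any proper-path coloring of the spanning subgraph $H$ can be lifted to a proper-path coloring of $G$ that uses the same number of colors. First I would set $k=pc(H)$ and fix a proper-path $k$-coloring $c$ of $H$. I then define an edge-coloring $c'$ of $G$ by keeping $c$ on every edge of $H$ and assigning, say, color $1$ to each edge of $E(G)\setminus E(H)$; the precise choice of colors for the new edges will turn out to be irrelevant.

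Next I would verify that $c'$ is a proper-path coloring of $G$. Since $H$ is spanning, $V(H)=V(G)$, so for any two distinct vertices $u,v$ of $G$ there is a proper $u-v$ path $P$ in $H$ with respect to $c$. Every edge of $P$ lies in $H$ and therefore keeps its color under $c'$, so no two consecutive edges of $P$ receive the same color in $(G,c')$; that is, $P$ is still a proper $u-v$ path, now viewed inside $G$. Hence $c'$ is a proper-path $k$-coloring of $G$, which gives $pc(G)\le k=pc(H)$.

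Finally, the ``in particular'' clause is immediate: every connected graph $G$ has a spanning tree $T$, and such a $T$ is a connected spanning subgraph of $G$, so the first part yields $pc(G)\le pc(T)$.

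I do not expect a genuine obstacle here, since the statement is just a monotonicity property under edge addition. The only point worth stating explicitly is that recoloring or adding edges outside $H$ cannot destroy a proper path that already lies entirely within $H$, because such a path never uses those edges; this is precisely what makes the one-line extension argument go through.
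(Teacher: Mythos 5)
Your proof is correct: extending a proper-path $k$-coloring of the spanning connected subgraph $H$ arbitrarily to the edges of $E(G)\setminus E(H)$ preserves every proper path of $H$ inside $G$, which is exactly the standard monotonicity argument. Note that the paper itself states this lemma without proof (it is quoted from Andrews et al.), and your argument is the expected one-line justification, so there is nothing to reconcile.
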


\begin{lemma}\label{lem2.2}
 Let $G$ be a nontrivial connected graph that contains
bridges. If $b$ is the maximum number of bridges incident with a single
vertex in $G$, then $pc(G)\ge b$.
\end{lemma}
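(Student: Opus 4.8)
The plan is to show that in \emph{any} proper-path coloring $c$ of $G$, the $b$ bridges incident with a single vertex of maximum bridge-degree must all receive pairwise distinct colors; this immediately forces $c$ to use at least $b$ colors and hence gives $pc(G)\ge b$. So I would fix a vertex $v$ incident with $b$ bridges $e_1=vu_1,\dots,e_b=vu_b$ (the $u_i$ are pairwise distinct since $G$ is simple, and all differ from $v$), fix an arbitrary proper-path coloring $c$ of $G$, and assume $b\ge 2$, the case $b\le 1$ being trivial.

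The first step is to observe that for $i\ne j$, every $u_i$--$u_j$ path in $G$ must use both $e_i$ and $e_j$. Since $e_i$ is a bridge and $G$ is connected, $G-e_i$ has exactly two components, one containing $v$ and one containing $u_i$; because $e_j=vu_j\ne e_i$ is still present in $G-e_i$ and is incident with $v$, the vertex $u_j$ lies in the component of $v$. Thus $u_i$ and $u_j$ are separated in $G-e_i$, so every $u_i$--$u_j$ path of $G$ traverses $e_i$, and by the symmetric argument it traverses $e_j$ as well.

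The second step is to take a proper $u_i$--$u_j$ path $P$ (which exists because $c$ is a proper-path coloring and $u_i\ne u_j$) and analyze it near $v$. By the first step $P$ contains both $e_i$ and $e_j$, and both are incident with $v$; since $v\notin\{u_i,u_j\}$, the vertex $v$ is an internal vertex of $P$ and is incident with exactly two edges of $P$, which must therefore be precisely $e_i$ and $e_j$. Hence $e_i$ and $e_j$ are consecutive on $P$, and properness of $P$ forces $c(e_i)\ne c(e_j)$. As $i\ne j$ were arbitrary, $c$ uses $b$ distinct colors on $e_1,\dots,e_b$, completing the argument.

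The proof is short, and the only place that calls for a bit of care is the structural claim in the second step --- that a proper $u_i$--$u_j$ path is forced to pass through $v$ with exactly $e_i$ and $e_j$ as its two edges at $v$; the rest is routine reasoning about bridges disconnecting the graph. In particular no use is made of Lemma~\ref{lem2.1} or of the elementary bounds stated in the introduction.
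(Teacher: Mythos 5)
Your proof is correct: the key observation that any $u_i$--$u_j$ path must traverse both bridges $e_i$ and $e_j$, and hence passes through $v$ with these two edges consecutive, does force $c(e_i)\neq c(e_j)$ in every proper-path coloring, giving $pc(G)\ge b$. The paper states this lemma without proof (it is quoted from the Andrews et al.\ reference), and your argument is exactly the standard one for this bound, so there is nothing to add.
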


\begin{lemma}\label{lem2.3}
If $T$ is a nontrivial tree, then $pc(T)=\chi'(T)=\Delta(T)$.
\end{lemma}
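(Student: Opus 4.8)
The plan is to squeeze $pc(T)$ between $\Delta(T)$ and $\chi'(T)$, and then to observe that these two outer quantities coincide for trees.

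First I would dispose of the easy inequalities. The bound $pc(T)\le \chi'(T)$ is already recorded in the introduction: any proper edge-coloring of $T$ makes \emph{every} path of $T$ a proper path, and in particular the unique path joining any two prescribed vertices, so a $\chi'(T)$-coloring is automatically a proper-path coloring. For the reverse bound $pc(T)\ge \Delta(T)$, pick a vertex $v$ with $d_T(v)=\Delta(T)$; since $T$ is a tree, each of the $\Delta(T)$ edges at $v$ is a bridge, so Lemma~\ref{lem2.2} applies and gives $pc(T)\ge \Delta(T)$. (If one prefers a direct argument: in any proper-path coloring, for two neighbours $u_1,u_2$ of $v$ the unique $u_1$--$u_2$ path runs through $v$ and uses the two edges $vu_1,vu_2$, which must therefore receive different colors; ranging over all pairs of neighbours of $v$ forces the $\Delta(T)$ edges at $v$ to be pairwise differently colored.)

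It then remains to check $\chi'(T)=\Delta(T)$. Since $\chi'(T)\ge \Delta(T)$ always holds, I only need $\chi'(T)\le \Delta(T)$, which is the classical fact that bipartite graphs, and trees in particular, have chromatic index equal to their maximum degree (König). To stay self-contained one can instead run a short induction on $|V(T)|$: delete a pendant edge $e=xy$ with $x$ a leaf, properly $\Delta(T)$-color $T-x$, and observe that at most $d_T(y)-1\le \Delta(T)-1$ colors already appear on the edges at $y$, so some color is free for $e$. Combining the three inequalities gives $\Delta(T)\le pc(T)\le \chi'(T)=\Delta(T)$, hence equality throughout.

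There is no genuinely hard step here; the only point needing a little care is the chromatic-index equality $\chi'(T)=\Delta(T)$, and even that is routine for trees (either invoke König's edge-coloring theorem, or run the pendant-edge induction with mild attention to the bookkeeping when deleting a pendant edge lowers the maximum degree). The conceptual content is simply that in a tree the uniqueness of the connecting path collapses the ``proper connection'' requirement down to the ``properly edge-colored'' requirement, which is why $pc$, $\chi'$ and $\Delta$ all agree.
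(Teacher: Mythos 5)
Your proof is correct: the squeeze $\Delta(T)\le pc(T)\le \chi'(T)=\Delta(T)$, with the lower bound coming from Lemma~\ref{lem2.2} (or the uniqueness of paths in a tree) and the upper bound from the fact that a proper edge-coloring makes the unique connecting path proper, is exactly the standard argument. The paper itself does not prove this lemma but quotes it from the cited reference of Andrews et al., where essentially this same reasoning is used, so there is nothing to add.
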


Given a colored path $P =v_1v_2\ldots v_{s-1}v_s$ between any two vertices $v_1$ and $v_s$, we denote by $start(P)$ the color of the first edge in the path, i.e. $c(v_1v_2)$, and by $end(P)$ the last color, i.e. $c(v_{s-1}v_s)$. If $P$ is just the edge $v_1v_s$ then $start(P)=end(P)=c(v_1v_s)$.

\begin{defi}

Let $c$ be an edge-coloring of $G$ that makes $G$ proper connected. We say $G$ has the strong property if for any pair of vertices $u, v\in V(G)$, there exist two proper paths $P_1$, $P_2$ between them (not necessarily disjoint) such that $start(P_1)\neq start(P_2)$  and $end(P_1)\neq end(P_2)$.
\end{defi}

In \cite{Borozan}, the authors studied proper-connection numbers in bipartite graphs. Also, they presented a result which improve the upper bound $\Delta(G)+1$ of $pc(G)$ to the best possible whenever the graph $G$ is bipartite and $2$-connected.

\begin{lemma}\cite{Borozan}\label{lem2.4}
Let $G$ be a graph. If $G$ is bipartite and $2$-connected then $pc(G)=2$ and there exists a $2$-edge-coloring of $G$ such that $G$ has the strong property.
\end{lemma}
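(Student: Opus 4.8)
The plan is to argue by induction along an \emph{open ear decomposition} of $G$, strengthening the strong property to an invariant adapted to the bipartition. Since $G$ is $2$-connected we may write $G=H_0\cup P_1\cup\cdots\cup P_t$, where $H_0$ is a cycle and, for $1\le i\le t$, the \emph{ear} $P_i$ is a path whose two endpoints lie in $H_{i-1}:=H_0\cup P_1\cup\cdots\cup P_{i-1}$ and whose internal vertices do not. As $G$ is bipartite, $H_0$ is an even cycle, and, crucially, every $H_i$ is bipartite, so each ear $P_i$ has even length when its two endpoints lie in the same part of the bipartition and odd length when they lie in different parts. This compatibility between ear parity and endpoint type is what will drive the induction.

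For a proper path $P$, call $(start(P),end(P))$ its \emph{pattern}. The invariant I would maintain on $H_i$ is: there is a $2$-edge-coloring of $H_i$ such that for every pair of vertices $x,y$ of $H_i$ there are proper $x$–$y$ paths of patterns $(1,2)$ and $(2,1)$ if $x,y$ lie in the same part, and of patterns $(1,1)$ and $(2,2)$ if they lie in different parts. In either case the two paths have distinct first colors and distinct last colors, so the invariant implies the strong property; the point of the refinement is that it also controls \emph{which} start/end combinations are available, which the bare strong property does not. For the base case, color the even cycle $H_0$ alternately with the two colors; since every arc of $H_0$ is then a proper path, inspecting the two arcs joining any pair of vertices verifies the invariant directly.

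For the inductive step, let $P_i=x_0x_1\cdots x_k$ be the new ear, with $u=x_0,v=x_k\in H_{i-1}$, and color its edges alternately (either color may start). Pairs of vertices of $H_{i-1}$ retain all their old proper paths, so the invariant survives for them. For a pair $(x_j,w)$ with $x_j$ internal and $w\in V(H_{i-1})$, any $x_j$–$w$ path must begin along one of the two ear-edges at $x_j$, so the two witnessing paths are forced to be $x_j\to u\to w$ and $x_j\to v\to w$; applying the invariant for $H_{i-1}$ to $(u,w)$ and to $(v,w)$ I would pick $u$–$w$ and $v$–$w$ proper paths whose first edges avoid the colors of $x_0x_1$ and $x_{k-1}x_k$, so that the concatenations are proper (the ear-segments are internally disjoint from $H_{i-1}$, so these are genuine paths), and a short case check on the parts of $u,v,w$ — using that the parity of $k$ equals the type of $\{u,v\}$ — shows the two concatenations have exactly the patterns the invariant demands. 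For a pair $x_j,x_{j'}$ of internal vertices with $j<j'$, one witness is the subpath of $P_i$ between them, and the other is forced to be $x_j\to u\to v\to x_{j'}$ with a $u$–$v$ proper path of $H_{i-1}$ inserted; invoking the invariant for $H_{i-1}$ on $(u,v)$ together with the ear-parity relation yields a $u$–$v$ path whose start and end colors make this concatenation proper and of the required pattern. Finally $pc(G)\ge 2$ because a $2$-connected bipartite graph on at least three vertices is not complete.

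The main obstacle is exactly this last family of pairs: splicing a path between two internal ear-vertices through $H_{i-1}$ demands a $u$–$v$ path with a prescribed start color \emph{and} a prescribed end color, whereas the strong property pins down start and end only separately. This is why the invariant must record whether a pair is same-part or different-part, and why one must check that bipartiteness forces every ear's parity to match the pattern $H_{i-1}$ can offer; verifying that these parities and the two available colorings of the new ear fit together in all cases is the delicate bookkeeping at the heart of the argument.
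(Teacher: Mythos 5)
Note first that the paper you are working from does not prove Lemma \ref{lem2.4} at all: it is imported verbatim from \cite{Borozan}, so there is no in-paper argument to measure you against; your proposal has to stand on its own, and it does. The open-ear-decomposition induction is sound, and it is in the same spirit as the inductive strengthening used in the original source. The one observation worth adding is that your ``refined'' invariant is in fact automatic once you use only two colors on a bipartite graph: a proper path with two colors must strictly alternate, so its end color is the start color flipped exactly when the path has even length, and the parity of every $x$--$y$ path is fixed by whether $x,y$ lie in the same part. Hence having two witnesses with distinct start colors already forces distinct end colors and forces exactly the patterns $(1,2),(2,1)$ (same part) or $(1,1),(2,2)$ (different parts); the invariant is equivalent to the strong property, but recording it in pattern form is precisely what makes your delicate step go through. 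Indeed, for the splice $x_j\to u\to v\to x_{j'}$ you need a $u$--$v$ path $R$ in $H_{i-1}$ with $start(R)\neq c(x_0x_1)$ and $end(R)\neq c(x_{k-1}x_k)$, which with two colors prescribes the full pattern of $R$; since the alternately colored ear has even length exactly when $u,v$ are in the same part, the prescribed pattern is $(b,a)$ with $a\neq b$ in the same-part case and $(b,b)$ in the different-part case, and in both cases the invariant for $H_{i-1}$ supplies it, independently of which color starts the ear. The remaining boundary cases you left implicit (trivial ears of length one, and pairs $(x_j,w)$ with $w\in\{u,v\}$) are routine and do not affect the argument, and the lower bound $pc(G)\ge 2$ is immediate since a $2$-connected bipartite graph is not complete. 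So your proof is correct; it simply reproves the cited result rather than quoting it.
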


Every complete $k$-partite graph $G=K_{n_1,n_2,\ldots,n_k}$ contains
a spanning bipartite subgraph $H=K_{n_1+n_2+\ldots n_{k-1},n_k}$. We
know that $H$ is $2$-connected if $n_k\ge 2$ and $k\ge 3$.
Therefore, we have the following result.

\begin{coro}\label{cor2.5}
Every complete $k$-partite graph $(k\ge 3)$ except for the complete
graph $K_k$ has proper connection number two, and there exists a
$2$-edge-coloring $c$ of $G$ such that $G$ has the strong property.
\end{coro}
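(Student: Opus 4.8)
The plan is to realise $G$ as a supergraph of a spanning, $2$-connected, bipartite graph, colour that subgraph via Lemma~\ref{lem2.4}, and then push the colouring up to all of $G$. Write $G=K_{n_1,\dots,n_k}$ with $k\ge 3$. Since $G$ is not the complete graph $K_k$, the $n_i$ are not all equal to $1$, so after permuting the parts we may assume $n_k\ge 2$. Put $N=n_1+\cdots+n_{k-1}$; as $k-1\ge 2$ and each $n_i\ge 1$ we get $N\ge 2$. The subgraph $H$ of $G$ consisting of all edges running between the union of the first $k-1$ parts and the last part is exactly $K_{N,n_k}$: it is bipartite, it is spanning (every vertex of the first $k-1$ parts has a neighbour in the last part since $n_k\ge 1$, and every vertex of the last part has a neighbour among the first parts since $N\ge 1$), and since both sides have at least two vertices it is $2$-connected.

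Next I would apply Lemma~\ref{lem2.4} to $H$ to obtain a $2$-edge-colouring $c_H$ of $H$ under which $H$ is proper connected and has the strong property. Extend $c_H$ to an edge-colouring $c$ of $G$ by colouring each edge of $G$ not already in $H$ arbitrarily with one of the two colours. The point to record is that being a proper path is a local condition: it depends only on the colours of consecutive edges along the path. Since $c$ agrees with $c_H$ on $E(H)$, every proper path of $(H,c_H)$ remains a proper path of $(G,c)$. Hence, for any two vertices $u,v$, the two proper $u$--$v$ paths in $H$ furnished by the strong property are proper $u$--$v$ paths in $G$ with distinct start colours and distinct end colours; so $(G,c)$ is proper connected and has the strong property. (Alternatively, $pc(G)\le pc(H)=2$ already follows from Lemma~\ref{lem2.1}, since $H$ is a connected spanning subgraph of $G$.) This gives $pc(G)\le 2$.

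For the matching lower bound, recall that $pc(G)=1$ holds only for the complete graph; since $G$ is not complete (some $n_i\ge 2$), we have $pc(G)\ge 2$, and therefore $pc(G)=2$. I do not anticipate a genuine obstacle: the construction of $H$ and the two complete-$k$-partite case analyses ($n_k\ge 2$ and $N\ge 2$) are elementary, Lemma~\ref{lem2.4} does the real work, and the only step deserving an explicit line of justification is the inheritance of proper paths (and hence of the strong property) when edges are added to $H$ — namely that no edge outside a path can affect whether that path is proper.
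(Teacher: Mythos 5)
Your proof is correct and follows essentially the same route as the paper, which likewise observes that $G$ contains the spanning $2$-connected bipartite subgraph $K_{n_1+\cdots+n_{k-1},\,n_k}$ (with $n_k\ge 2$ after permuting parts, $k\ge 3$) and invokes Lemma~\ref{lem2.4}. Your explicit remarks on extending the colouring to the remaining edges and on the lower bound $pc(G)\ge 2$ merely spell out what the paper leaves implicit.
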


For general $2$-connected graphs, Borozan et al. \cite{Borozan} gave
a tight upper bound for the proper connection number.
\begin{lemma}\label{lem2.6}
Let $G$ be a graph. If $G$ is $2$-connected then $pc(G)\leq 3$ and there exists a $3$-edge-coloring $c$ of $G$ such that $G$ has the strong property.
\end{lemma}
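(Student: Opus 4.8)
The plan is to build the required coloring along an \emph{open ear decomposition} of $G$. Assuming $|V(G)|\ge 3$, since $G$ is $2$-connected we may write $G=C_{0}\cup P_{1}\cup\cdots\cup P_{k}$, where $C_{0}$ is a cycle and each $P_{i}$ is an \emph{ear}, i.e.\ a path whose two endpoints are distinct vertices of $C_{0}\cup P_{1}\cup\cdots\cup P_{i-1}$ and whose internal vertices are new. I would prove the lemma by induction on $k$; the inductive claim is that $G$ admits a $3$-edge-coloring with the strong property, which, since such a coloring in particular makes $G$ proper connected, certifies $pc(G)\le 3$.

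\emph{Base case.} Let $G=C_{0}$. Color the edges of the cycle alternately with two colors if $|C_{0}|$ is even, and do the same but recolor one edge with a third color if $|C_{0}|$ is odd. In both cases every vertex is incident with two cycle-edges of different colors, and every arc of the cycle is a proper path; hence for any two vertices the two arcs joining them are proper paths with distinct first edges and distinct last edges, so $C_{0}$ has the strong property using at most three colors.

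\emph{Inductive step.} Write $G=H\cup P$ with $H=C_{0}\cup\cdots\cup P_{k-1}$. Then $H$ is $2$-connected, so by induction it carries a $3$-coloring $c$ with the strong property, which we extend over the ear $P=ux_{1}x_{2}\cdots x_{t}v$, whose edges are $f_{0}=ux_{1},\dots,f_{t}=x_{t}v$. If $t=0$, the ear is a single new edge $uv$; color it arbitrarily. As $V(G)=V(H)$ and no old color was changed, the two witnessing paths for each pair of vertices still lie in $G$, so $G$ has the strong property. Now suppose $t\ge 1$. Since $H$ is proper connected, fix a proper $u$-$v$ path $Q$ in $H$, and color the ear so that $P$ is a proper path, $c(f_{0})\neq start(Q)$, and $c(f_{t})\neq end(Q)$ (three colors make this possible). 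To verify the strong property for the new pairs, note that every internal ear-vertex $x_{i}$ has degree $2$ in $G$, with neighbors $x_{i-1}$ and $x_{i+1}$ (where $x_{0}=u$, $x_{t+1}=v$), so any path leaving $x_{i}$ begins with $f_{i-1}$ or with $f_{i}$, and these colors differ; hence for each partner it suffices to produce one proper path of each kind whose terminal colors at the partner are distinct. For two ear-vertices $x_{i},x_{j}$ with $i<j$, one takes the ear-segment $x_{i}\cdots x_{j}$ together with the path that runs $x_{i}\to u$ along the ear, then $Q$, then $v\to x_{j}$ along the ear; the junctions at $u$ and $v$ are proper by the choice of $c(f_{0})$ and $c(f_{t})$, and the two paths reach $x_{j}$ along $f_{j-1}$ and $f_{j}$. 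For an ear-vertex $x_{i}$ and an old vertex $w$, one takes $x_{i}\to u$ followed by a proper $u$-$w$ path of $H$ whose first edge is not $c(f_{0})$, and $x_{i}\to v$ followed by a proper $v$-$w$ path of $H$ whose first edge is not $c(f_{t})$; these exist because $H$ has the strong property. Pairs with both vertices in $H$ are handled by $c$ as before.

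\emph{Main obstacle.} The delicate point is exactly the last construction: those two $H$-paths must be choosable so that they arrive at $w$ with \emph{different} colors. The strong property of $H$ only yields two proper $u$-$w$ paths with distinct first and distinct last edges, but after discarding those whose first edge equals $c(f_{0})$ the survivors may all terminate at $w$ with one and the same color; the same may happen on the $v$-side; and the two surviving colors may coincide. This is where the real work lies. The freedom one has is the choice of the two terminal ear-colors $c(f_{0})$ and $c(f_{t})$, so I would not commit to them before analyzing $H$: they should be chosen jointly with $Q$ and with the witnessing paths of $H$, via a case analysis (a plain union bound over all candidates $w$ does not obviously suffice), using structural consequences of the strong property --- for example, $H$ has no degree-$2$ vertex both of whose incident edges carry the same color, as otherwise all proper paths leaving that vertex would share the same first color. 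A tempting shortcut, namely strengthening the induction hypothesis to ``for every pair of vertices and every color prescribed at each of its two ends, some proper path between them avoids those colors at its two ends,'' fails, since it already fails for even cycles. Granting the mixed pairs, the lemma, and in particular $pc(G)\le 3$, follows.
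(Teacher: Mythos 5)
This lemma is not proved in the paper at all: it is imported verbatim from Borozan et al.\ \cite{Borozan}, so there is no in-paper argument to compare yours against. Judged on its own terms, your proposal is an honest sketch but it contains a genuine, and self-acknowledged, gap at exactly the point where the theorem is hard. In the inductive step over an ear $P=ux_1\cdots x_tv$, the strong property must be verified for pairs consisting of an internal ear vertex $x_i$ and a vertex $w$ of $H$ (including $w=u,v$). Your two candidate paths are (ear to $u$) followed by a proper $u$--$w$ path of $H$ whose first edge avoids $c(f_0)$, and (ear to $v$) followed by a proper $v$--$w$ path of $H$ whose first edge avoids $c(f_t)$. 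The induction hypothesis (strong property of $H$) only guarantees that each of these two families is nonempty; it does not guarantee that you can pick one path from each family ending at $w$ in \emph{different} colors. You name this obstacle yourself and propose to resolve it by choosing $c(f_0)$, $c(f_t)$, $Q$ and the witnessing paths jointly ``via a case analysis,'' but that analysis is never carried out, and it is not routine: for each old vertex $w$ the set of achievable end-colors after the start-color restriction can be a singleton, these singletons vary with $w$, and the only knobs you have are the two colors $c(f_0),c(f_t)$ (themselves constrained by the properness of the ear and the junctions with $Q$). Nothing in the strong property as you have formulated it rules out a simultaneous clash, and you even observe that the natural strengthening of the induction hypothesis that would fix this fails on even cycles.

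Note also that the damage is not confined to the strong-property clause: since the strong property of $H$ is the inductive invariant you rely on to color and to traverse the next ear, failing to re-establish it for $H\cup P$ collapses the whole induction, so even the weaker conclusion $pc(G)\le 3$ is not secured by your argument. The parts you do complete (the base case on a cycle, trivial ears $t=0$, ear--ear pairs, and plain proper connectivity of $H\cup P$) are fine, but the mixed-pair case is the heart of the lemma, and as written it remains open; this is precisely why the result is a nontrivial theorem of \cite{Borozan} rather than a quick ear-decomposition exercise.
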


\begin{coro}\label{cor2.7}
Let $H=G\cup\{v_1\}\cup\{v_2\}$. If there is a proper-path
$k$-coloring $c$ of $G$ such that $G$ has the strong property, then
$pc(H)\leq k$ as long as $v_1,v_2$ are not isolated vertices of $H$.
Moreover, we have that $pc(G\cup\{v_1\})\le k$ under the same
assumption.
\end{coro}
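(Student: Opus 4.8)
The plan is to keep the given colouring $c$ on all of $E(G)$ and to colour only the \emph{new} edges of $H$ (those incident with $v_1$ or $v_2$) using the same palette of $k$ colours; this will show $pc(H)\le k$. We may assume $k\ge 2$, since on a graph with at least two vertices the strong property cannot hold with a single colour (with one colour the only proper paths are single edges). As $c$ makes $G$ proper connected, $G$ is connected, and since the conclusion concerns $pc(H)$ we assume $H$ is connected; together with the hypothesis that $v_1,v_2$ are not isolated this forces each $v_i$ either to have a neighbour in $V(G)$ or to be adjacent to the other vertex (and in the latter case the other one then has a neighbour in $V(G)$). The pairs of vertices of $H$ that must be joined by proper paths split into: pairs inside $V(G)$, already served by the proper paths of $c$ in $G$; pairs $\{v_i,x\}$ with $x\in V(G)$; and the pair $\{v_1,v_2\}$.

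First I would dispose of the pairs $\{v_i,x\}$. If $v_i$ has a neighbour in $V(G)$, fix one such neighbour $w_i$ and colour every edge from $v_i$ to $V(G)$ with one colour $\alpha_i$. For $x\in V(G)$, a proper $v_i$--$x$ path is then $v_iw_i$ followed by a proper $w_i$--$x$ path in $G$ whose first edge is not coloured $\alpha_i$; such a $w_i$--$x$ path exists by the strong property of $G$, since among the two proper $w_i$--$x$ paths with distinct start colours at least one starts with a colour $\ne\alpha_i$ (and if $x=w_i$ the edge $v_iw_i$ alone suffices). Applied to $v_1$ only, this already gives the ``Moreover'' assertion $pc(G\cup\{v_1\})\le k$.

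The crux is the pair $\{v_1,v_2\}$, and here I would split into cases. If $v_1v_2\in E(H)$: colour $v_1v_2$ with colour $1$ and every other new edge with colour $2$. Then $v_1v_2$ is itself a proper path; the paths of the previous paragraph work with $\alpha_1=\alpha_2=2$; and if, say, $v_2$ has no neighbour in $V(G)$ (so $v_1$ does, as $H$ is connected), a proper $v_2$--$x$ path for $x\in V(G)$ is $v_2v_1$ (colour $1$), $v_1w_1$ (colour $2$), then a proper $w_1$--$x$ path in $G$ starting with a colour $\ne 2$. If $v_1v_2\notin E(H)$: then both $v_1$ and $v_2$ have neighbours in $V(G)$. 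If they have a common neighbour $w$, choose $\alpha_1=1$, $\alpha_2=2$ (so $v_1w$ gets $1$, $v_2w$ gets $2$), and $v_1wv_2$ is a proper path. If they have no common neighbour (so $w_1\ne w_2$), pick any proper $w_1$--$w_2$ path $P$ in $G$ and then \emph{choose} $\alpha_1$ different from $start(P)$ and $\alpha_2$ different from $end(P)$ (possible since $k\ge 2$); then $v_1w_1$, $P$, $w_2v_2$ concatenate to a proper $v_1$--$v_2$ path. In every case the new colouring together with $c$ is a proper-path $k$-colouring of $H$, so $pc(H)\le k$.

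The only genuinely delicate point is the last configuration, $v_1v_2\notin E(H)$ with no common neighbour in $G$. A single invocation of the strong property on $\{w_1,w_2\}$ yields two proper $w_1$--$w_2$ paths with distinct starts \emph{and} distinct ends, but possibly no single one of them simultaneously avoids a prescribed start colour and a prescribed end colour, so the tempting ``colour $v_1w_1$ and $v_2w_2$ first, then route through $G$'' argument need not close. The remedy, as above, is to defer the choice of the colours $\alpha_1,\alpha_2$ of these two edges until a path $P$ is in hand and to match them to $P$; it then remains only to check that this choice does not spoil the $\{v_i,x\}$ paths (it does not, as those work for every $\alpha_i$) and to clear the small degenerate configurations, all of which is routine.
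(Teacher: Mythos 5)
Your proof is correct and follows essentially the same route as the paper: extend $c$ to the new edges, use the strong property to connect each $v_i$ to the vertices of $G$, and for the pair $\{v_1,v_2\}$ route through a proper path $P$ between chosen neighbours, picking the colours of the pendant edges to avoid $start(P)$ and $end(P)$ (with the common-neighbour case handled by two distinct colours). You are in fact slightly more thorough than the paper, which does not explicitly treat the cases $v_1v_2\in E(H)$ or a $v_i$ whose only neighbour is the other new vertex.
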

\begin{proof}
Let $u_1\in N_H(v_1)$ and $u_2\in N_H(v_2)$, and let $1, 2$ be two
used colors. If $u_1=u_2$, we extend the coloring $c$ of $G$ to the
whole graph $H$ by assigning color $1$ to $u_1v_1$, and $2$ to
$u_2v_2$. Otherwise, $u_1\neq u_2$. Since $G$ is proper connected,
there exists a proper path $P$ of $G$ connecting $u_1$ and $u_2$. We
assign an used color that is distinct from $start(P)$ to $u_1v_1$,
and an used color distinct from $end(P)$ to $u_2v_2$. In both cases,
$v_1$ and $v_2$ are connected by a proper path. For any $w\in V(G)$,
we can also easily check that $w$ and $v_i$ ($i=1,2$) are connected
by a proper-path since $G$ has the strong property. Hence $pc(H)\le
k$. The conclusion $pc(G\cup\{v_1\})\le k$ follows directly using
the analysis above. Hence we complete the proof.
\end{proof}

\section{Proper connection number of complementary graph}

We first investigate the proper connection numbers of connected complement graphs of graphs with diameter at least $4$.
\begin{theorem}\label{th2.1}
If $G$ is a connected graph with $diam(G)\geq 4$, then $pc(\overline{G})=2$.
\end{theorem}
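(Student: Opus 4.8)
The plan is to show that when $\mathrm{diam}(G)\ge 4$, the complement $\overline{G}$ is so dense that it contains a nice $2$-connected bipartite (or complete multipartite) spanning subgraph, and then to invoke Lemma~\ref{lem2.4} or Corollary~\ref{cor2.5}. First I would fix two vertices $u,v$ in $G$ with $\mathrm{dist}_G(u,v)=\mathrm{diam}(G)\ge 4$. Partition $V(G)$ according to distance from $u$: let $V_i=\{x: \mathrm{dist}_G(u,x)=i\}$ for $i=0,1,2,\dots$, so that $V_0=\{u\}$, $v\in V_k$ with $k\ge 4$, and there are no edges of $G$ between $V_i$ and $V_j$ whenever $|i-j|\ge 2$. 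In particular, in $\overline{G}$ the set $V_0\cup V_1$ is completely joined to $V_3\cup V_4\cup\cdots$, and likewise many "long-range" pairs become edges of $\overline{G}$. The key structural observation is that $A:=V_0\cup V_1\cup V_2$ and $B:=V_3\cup V_4\cup\cdots$ form a partition of $V(G)$ into two nonempty parts such that every $A$–$B$ pair except possibly those with one endpoint in $V_2$ and the other in $V_3$ is an edge of $\overline{G}$; more usefully, $\{u\}$ (from $V_0$) is adjacent in $\overline{G}$ to all of $B$, and every vertex of $V_2\cup B$ is adjacent in $\overline{G}$ to $u$.

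Next I would extract an explicit $2$-connected bipartite spanning subgraph $H$ of $\overline{G}$. Take the bipartition $(X,Y)$ with, say, $X=\{u\}\cup(V_2\cup V_4\cup V_6\cup\cdots)$ wait—this needs care—instead I would argue more robustly: since $\mathrm{diam}(G)\ge 4$, both $u$ and $v$ are non-adjacent in $G$ to everything outside their immediate neighborhoods, so in $\overline{G}$ the vertex $u$ has degree $\ge |V_2\cup V_3\cup\cdots|\ge n-1-|V_1|$ and similarly $v$ has large degree, and crucially $u$ and $v$ are adjacent in $\overline{G}$ (since $\mathrm{dist}_G(u,v)\ge 2$). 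One clean way: let $W=V(G)\setminus(\{u,v\}\cup N_G(u)\cup N_G(v))$; every vertex of $W$ together with $u$ and $v$ induces, in $\overline{G}$, a graph in which $u$ and $v$ are each adjacent to all of $W$ and to each other. Then I would show $\overline{G}$ contains a spanning complete bipartite-like subgraph or apply Lemma~\ref{lem2.6}/Lemma~\ref{lem2.4} to a $2$-connected spanning subgraph built around the two hubs $u,v$: route every other vertex to both $u$ and $v$ (possible because each such vertex is non-adjacent in $G$ to at least one of $u,v$—indeed a vertex adjacent in $G$ to both $u$ and $v$ would give $\mathrm{dist}_G(u,v)\le 2$, contradiction), giving a $2$-connected spanning subgraph that is moreover bipartite with parts $\{u,v\}$ and $V(G)\setminus\{u,v\}$... except $uv$ is an edge, so I would instead take the spanning subgraph $H$ whose edges are all $\overline{G}$-edges from $u$ or $v$ to the rest; this $H$ is $2$-connected (every vertex $\ne u,v$ has two neighbors $u,v$, and $u,v$ are each non-cut since the other hub holds things together) and contains no odd structure forcing more than $3$ colors. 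Applying Lemma~\ref{lem2.6} gives $pc(\overline{G})\le 3$; to push to $2$, I would observe $H$ can be taken bipartite after deleting edge $uv$ (still $2$-connected since each non-hub vertex retains both neighbors and $\mathrm{diam}\ge 4$ guarantees enough such vertices), so Lemma~\ref{lem2.4} applies and $pc(\overline{G})=2$; the lower bound $pc(\overline{G})\ge 2$ holds because $\overline{G}\ne K_n$ (as $G$ has at least one edge).

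The main obstacle is verifying that the hub-based spanning subgraph $H$ is genuinely $2$-connected and can be made bipartite simultaneously: I must confirm that after removing $uv$, the remaining graph—a "double star" with centers $u,v$ joined through a common set of leaves—is $2$-connected, which requires that the common leaf set be nonempty, i.e. that some vertex is non-adjacent in $G$ to both $u$ and $v$. A vertex $w$ adjacent in $G$ to both $u$ and $v$ yields $\mathrm{dist}_G(u,v)\le 2<4$, so no such $w$ exists and in fact every vertex other than $u,v$ is a common $\overline{G}$-neighbor of at least one of them; getting a vertex adjacent to both in $\overline{G}$ (i.e. adjacent to neither in $G$) requires slightly more, but since $\mathrm{dist}_G(u,v)\ge 4$ the sets $N_G(u)$ and $N_G(v)$ are disjoint and $N_G[u]\cup N_G[v]\ne V(G)$ (else $\mathrm{dist}_G(u,v)\le 3$), so such a common $\overline{G}$-neighbor $w$ exists, making $\{u,w,v\}$ a path that certifies $2$-connectivity of the bipartite $H-uv$. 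Once $2$-connectivity and bipartiteness of this spanning subgraph are in hand, Lemma~\ref{lem2.4} (or Lemma~\ref{lem2.1} combined with it) closes the argument immediately.
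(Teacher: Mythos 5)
Your argument has a genuine gap at its central step: the hub-based spanning subgraph $H$ (all $\overline{G}$-edges from $u$ or $v$ to the rest) is in general \emph{not} $2$-connected, and neither is $H-uv$. The claim that ``every vertex $\ne u,v$ has two neighbors $u,v$'' is false: a vertex $w\in N_G(u)$ is adjacent in $G$ to $u$, hence non-adjacent to $u$ in $\overline{G}$, so in $H$ it is attached \emph{only} to $v$ (and symmetrically for $w\in N_G(v)$). Since $G$ is connected, $N_G(u)\neq\emptyset$, so $H$ always has pendant vertices; in fact all of $N_G(u)$ hangs as pendant edges at $v$, so by Lemma~\ref{lem2.2} the subgraph you propose satisfies $pc(H-uv)\ge |N_G(u)|$, which can be arbitrarily large. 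Exhibiting one common $\overline{G}$-neighbor $w$ of $u$ and $v$ (which does exist, as you argue) only gives a $u$--$v$ path through $w$; it does nothing to repair the cut vertex $v$ separating the pendants of $N_G(u)$, so Lemma~\ref{lem2.4} (and even Lemma~\ref{lem2.6}) cannot be applied to this subgraph, and Lemma~\ref{lem2.1} then yields no useful bound. To make a two-hub construction work you would have to enrich the subgraph with edges between intermediate distance classes, at which point bipartiteness is lost (e.g. $x$, a vertex at distance $2$, and a vertex at distance $\ge 4$ form a triangle in $\overline{G}$), so the ``$2$-connected bipartite'' shortcut is not available.

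The paper avoids this by partitioning $V(G)$ into distance classes $N_0,\dots,N_3$ and $N_4=\{v:dist(x,v)\ge 4\}$ from a single peripheral vertex $x$, taking the spanning subgraph $G^*$ of $\overline{G}$ consisting of the guaranteed complete bipartite connections ($x$ to $N_2\cup N_3\cup N_4$, $N_1$ to $N_3\cup N_4$, $N_2$ to $N_4$), and then writing down an explicit $2$-edge-coloring of $G^*$ and verifying proper paths between every pair directly (e.g. $u x_3 x x_4 v$ for $u,v\in N_1$). Note that $G^*$ is not bipartite, so no appeal to Lemma~\ref{lem2.4} is made; the direct coloring is what handles the vertices of $N_1$ and $N_2$ that your hub construction leaves dangling. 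Your distance-layer setup at the start is the right idea, but the reduction to a $2$-connected bipartite spanning subgraph on two hubs does not go through.
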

\begin{figure}
  \centering
  \includegraphics{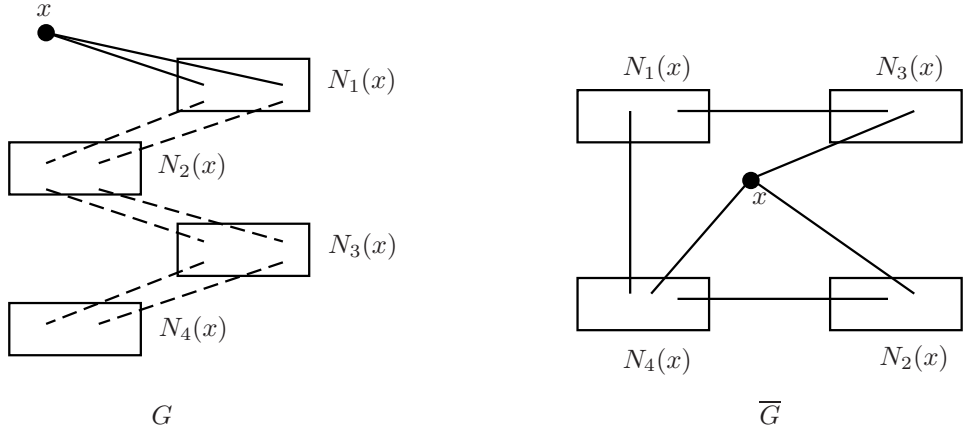}\\
  \caption{$G$ and $\overline{G}$ with  $diam(G)\geq 4$}
\end{figure}
\begin{proof}
First of all, we see that $\overline{G}$ must be connected, since otherwise, $diam(G)\leq 2$, contradicting the condition $diam(G)\geq 4$.
We choose a vertex $x$ with $ecc_G(x)=diam(G)$. Let $N_i(x)=\{v: dist(x,v)=i\}$ where $0\leq i\leq 3$ and $N_4 (x)=\{v: dist(x,v)\geq 4\}$. So $N_0=\{x\}$ and $N_1=N_G(x)$. In the rest of our paper, we use $N_i$ instead of $N_i(x)$ for convenient. By the definition of $N_i$, we know that in $\overline{G}$, there is a spanning subgraph $G^*$ such that $G^*[N_1\cup N_3](G^*[N_1\cup N_4], G^*[N_2\cup N_4])$ is a complete bipartite graph(see Fig. 1). We give $G^*$ an edge-coloring as follows: we first give the color 1 to the edges $xu$ for $u\in N_3$ and to all edges between $N_1$ and $N_4$; next we give the color 2 to all the remaining edges. Now we prove that this coloring is a proper-path coloring.

It is obvious that for any $u\in N_i$ and $v\in N_j$ with $i\not=j$, $u, v$ are connected by a proper path. So it suffices to show that for any $u,v\in N_i$, there is a proper path connecting them in $G^*$. For $i=1$, let $P=ux_3xx_4v$ where $x_3\in N_3$ and $x_4\in N_4$. Clearly, $P$ is a proper path. Similarly, there is a proper path connecting any two vertices $u,v\in N_3$ or $N_4$. For $i=2$, let $Q=uxx_3x_1x_4v$, where $x_1\in N_1, x_3\in N_3$ and $x_4\in N_4$. One can see that $Q$ is a proper path. Hence we have that $G^*$ is proper connected, i.e., $pc(G^*)\leq 2$. Together with the fact that $\overline{G}$ is not complete, we have that $1\not=pc(\overline{G})\leq pc(G^*)\leq 2$. Hence we have $pc(\overline{G})=2$.
\end{proof}

\begin{theorem} For a connected noncomplete graph $G$, if $\overline{G}$ does not belong to the following two cases: (i) $diam(\overline{G})=2,3$, (ii) $\overline{G}$ contains exactly two connected components and one of them is trivial, then $pc(G)=2$.
\end{theorem}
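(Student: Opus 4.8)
The plan is to show that under the stated hypotheses, $G$ contains a connected spanning subgraph with proper connection number at most $2$; then Lemma~\ref{lem2.1} gives $pc(G)\le 2$, and since $G$ is noncomplete, $pc(G)=2$. The argument naturally splits according to the structure of $\overline{G}$, and the key point is that in every surviving case $G$ inherits enough ``density'' from the complement. First I would observe that if $\overline{G}$ is disconnected with at least two nontrivial components, then $G$ contains a complete bipartite spanning subgraph $K_{s,t}$ with $s,t\ge 2$, hence a $2$-connected bipartite spanning subgraph, and Lemma~\ref{lem2.4} finishes this case. If $\overline{G}$ is disconnected with more than two components (some possibly trivial), then $G$ contains a complete multipartite spanning subgraph $K_{n_1,\dots,n_k}$ with $k\ge 3$, and Corollary~\ref{cor2.5} applies.

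Next I would treat the case where $\overline{G}$ is connected. Since case (i) is excluded, we have $diam(\overline{G})\ge 4$ (it cannot be $1$, as $\overline{G}=K_n$ would force $G$ to be edgeless and disconnected for $n\ge 3$, nor can it be $0$). Now I would apply Theorem~\ref{th2.1} \emph{with the roles of $G$ and $\overline{G}$ interchanged}: since $\overline{G}$ is a connected graph with $diam(\overline{G})\ge 4$, we get $pc(\overline{\overline{G}})=pc(G)=2$. This is the clean part of the argument — essentially a bookkeeping reduction to the theorem already proved.

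The remaining case is $\overline{G}$ disconnected with exactly two components, one nontrivial and one trivial — but this is precisely case (ii), which is excluded. So the only genuinely new work is the disconnected-$\overline{G}$ analysis, and the main obstacle there will be handling the borderline situation where $\overline{G}$ has exactly two components of which one is a single edge (so $G$ contains $K_{n-2,2}$ minus possibly nothing, giving a spanning $K_{2,n-2}$): one must check $K_{2,n-2}$ is $2$-connected when $n\ge 4$, which it is, so Lemma~\ref{lem2.4} still applies. I would also need to rule out trivial small-$n$ degeneracies (e.g. $n\le 3$) where ``noncomplete and connected with connected or suitably-disconnected complement'' may be vacuous or force $G$ to be a path on $3$ vertices — but $P_3$ has $pc=2$ as well, consistent with the statement, and in any case $P_3$ has $diam(\overline{P_3})$ undefined/$\infty$ since $\overline{P_3}=K_1\cup K_2$, which lands in case (ii). Thus after the case split every configuration is either excluded by hypothesis or yields $pc(G)=2$ via one of the cited lemmas.
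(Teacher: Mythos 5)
Your proposal is correct and follows essentially the same route as the paper: when $\overline{G}$ is connected the excluded cases force $diam(\overline{G})\ge 4$ and Theorem~\ref{th2.1} applied to $\overline{G}$ gives $pc(G)=2$, while when $\overline{G}$ is disconnected the hypothesis leaves either at least three components or two nontrivial ones, so $G$ has a spanning complete multipartite (resp.\ $2$-connected complete bipartite) subgraph and Corollary~\ref{cor2.5} (resp.\ Lemma~\ref{lem2.4}) together with Lemma~\ref{lem2.1} finishes. Your extra checks (ruling out $diam(\overline{G})=1$, $2$-connectedness of $K_{2,n-2}$, small $n$) are fine but add nothing beyond the paper's argument.
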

\begin{proof}
If $\overline{G}$ is connected, we know that $diam(\overline{G})\ge 4$. Hence  $pc(G)=2$ clearly holds by Theorem \ref{th2.1}. If $\overline{G}$ is disconnected. Suppose that $\overline{G}_i$ $(1\le i\le h)$ are the connected components of $\overline{G}$ with $n_i=|V(\overline{G}_i)|$. Then $G$ contains a spanning subgraph $K_{n_1,n_2,\ldots,n_h}$. By the assumption, $\overline{G}$ has either at least three connected components or exactly two nontrivial components. Then we have $pc(G)=2$ from Lemma \ref{lem2.4} and Corollary \ref{cor2.5}.
\end{proof}

If $diam(G)=3$, we have the following theorem for the proper connection number of $\overline{G}$.

\begin{theorem}
Let $G$ be a connected graph with $diam(G)=3$ and $x$ the vertex of $G$ such that $ecc_G(x)=3$ (see Fig. 2). We have $pc(\overline{G})=2$ for the two cases (i) $n_1=n_2=n_3=1$, (ii) $n_2=1, n_3\geq 2$. For the remaining cases, $pc(\overline{G})$ may be very large. Furthermore, if $G$ is triangle-free, then $pc(\overline{G})=2$.
\end{theorem}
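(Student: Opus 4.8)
The plan is to work with the vertex $x$ of eccentricity $3$ and the distance layers $N_1, N_2, N_3$ (where $N_0=\{x\}$), exactly as in the proof of Theorem~\ref{th2.1}, but now with only three nontrivial layers available. As before, $\overline{G}$ is automatically connected since $diam(G)=3$ forces $x$ to be nonadjacent in $G$ to every vertex of $N_2\cup N_3$, so in $\overline{G}$ the vertex $x$ already reaches $N_2\cup N_3$ directly, and it reaches $N_1$ through any vertex of $N_3$ (edges $xN_3$ and $N_1N_3$ are present in $\overline{G}$ since in $G$ there is no edge from $x$ to $N_3$ and no edge from $N_1$ to $N_3$). The two ``easy'' cases (i) $n_1=n_2=n_3=1$ and (ii) $n_2=1,\ n_3\ge 2$ should be handled by writing down an explicit $2$-edge-coloring of a suitable spanning subgraph $G^*$ of $\overline{G}$: in case (i), $\overline{G}$ itself is (contains) the path $x\,N_2\,N_1\,\ldots$ together with the edge $xN_3$, and a short $2$-coloring works directly; in case (ii), since $N_2=\{y\}$ is a single vertex and $|N_3|\ge 2$, the bipartite graph between $N_1$ and $N_3$ in $\overline{G}$ is complete, and I would color edges $xN_3$ and the edges from $y$ to $N_3$ with color $1$ and the remaining edges (including those between $N_1$ and $N_3$) with color $2$, then verify that every monochromatic-forbidden pair inside a layer is joined by a proper path of the form $u\,x_3\,x\,\cdots$ or $u\,x\,x_3\,x_1\,x_3'\,v$, using $|N_3|\ge 2$ to get the two distinct vertices $x_3,x_3'$.

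For the triangle-free claim, the key observation is that if $G$ is triangle-free then every layer induces an independent set in $G$, hence a clique in $\overline{G}$, and moreover $N_1$ is independent in $G$ so $x$ together with $N_1$ forms... more usefully, the complement restricted to $N_1\cup N_2$, $N_1\cup N_3$, etc., is dense. I would argue that $\overline{G}$ contains a $2$-connected bipartite spanning subgraph, or directly construct the proper $2$-coloring. Concretely: since $G$ is triangle-free and $x$ has a neighbour in $N_1$, pick $x_1\in N_1$; in $G$, $x_1$ has no neighbour in $N_1$ (triangle through $x$) — wait, that is not automatic. The cleaner route: in $G$ triangle-free, for any $u\in N_1$ its $G$-neighbourhood in $N_1\cup\{x\}$ is just $\{x\}$, so $u$ is $\overline{G}$-adjacent to all of $N_1\setminus\{u\}$; similarly analyze $N_2,N_3$. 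This makes each $\overline{G}[N_i]$ complete and gives many cross edges, from which one builds a spanning subgraph that is bipartite and $2$-connected (splitting $V$ into two parts each meeting enough layers), then applies Lemma~\ref{lem2.4} to conclude $pc(\overline{G})=2$; the lower bound $pc(\overline{G})\ge 2$ holds because $\overline{G}$ is not complete (as $G$ is connected and noncomplete).

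I expect the main obstacle to be the triangle-free case: one must verify that the chosen spanning subgraph of $\overline{G}$ is genuinely $2$-connected (no cut vertex), which requires a careful case analysis according to which of $n_1,n_2,n_3$ equal $1$ and whether layers can be ``bridged'' only through $x$. In the worst small cases (e.g. $n_2=1$) the subgraph between consecutive layers may be too thin, so one may instead need to exhibit the proper $2$-coloring by hand rather than invoke Lemma~\ref{lem2.4}; in particular, when some layer is a single vertex, I would fall back on the explicit coloring argument (color one ``bridging'' edge differently) as in cases (i) and (ii), checking the strong-property-style condition only as far as needed to route each same-layer pair. The routine parts — listing which edges are present in $\overline{G}$ from the triangle-free and distance-layer hypotheses, and writing out the handful of proper paths — I will not grind through here.
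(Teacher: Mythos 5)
There are genuine gaps in three places. First, in case (ii) your explicit coloring uses ``the edges from $y$ to $N_3$'' in $\overline{G}$ (where $N_2=\{y\}$), but no such edges exist: since $n_2=1$, every vertex of $N_3$ must reach $x$ through $y$, so $y$ is adjacent in $G$ to all of $N_3$ and hence to none of it in $\overline{G}$. The paper avoids this by observing that $\overline{G}$ contains the complete bipartite graph $K_{1+n_1,n_3}$ on $(\{x\}\cup N_1, N_3)$, which is $2$-connected, so Lemma~\ref{lem2.4} gives a $2$-coloring with the strong property, and the lone vertex $y$ is then attached by Corollary~\ref{cor2.7} (it is not isolated because $xy\in E(\overline{G})$). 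Second, in the triangle-free part your key claim that ``each $\overline{G}[N_i]$ is complete'' is false: only $N_1$ is forced to be independent in $G$ (its vertices share the neighbour $x$); two vertices of $N_2$ (or of $N_3$) may well be adjacent in $G$ without creating a triangle, since they need not have a common neighbour. Consequently the plan of extracting a $2$-connected bipartite spanning subgraph is not justified and, as you yourself suspect, is the hard point; the paper instead writes down an explicit $2$-coloring (colour $1$ on $xN_2$ and $N_1$--$N_3$ edges, colour $2$ elsewhere) and routes a same-layer pair $u,v\in N_2$ with $uv\in E(G)$ through a vertex $w\in N_1$ that is $G$-adjacent to $u$ but, by triangle-freeness, not to $v$, yielding the proper path $u\,x\,x_3\,w\,v$ in $\overline{G}$. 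Your sketch never identifies this use of triangle-freeness, which is the actual engine of the argument.

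Third, you do not address the middle assertion of the theorem at all: that in the remaining cases ($n_1>1$, $n_2=n_3=1$, or $n_2\ge 2$) the value $pc(\overline{G})$ may be arbitrarily large. The paper proves this by counting cut edges of $\overline{G}$ incident with a single vertex (pendant vertices of $\overline{G}$ arising from vertices of $N_1$ with no $G$-neighbour in $N_1$, respectively vertices of $N_2$ of degree $1$ in $\overline{G}$) and invoking Lemma~\ref{lem2.2}. Without some such lower-bound construction the statement is only half proved. Case (i) ($G\cong P_4$, $\overline{G}\cong P_4$) is the only part of your proposal that goes through as written.
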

\begin{figure}
  \centering
 \scalebox{0.6}[0.5]{\includegraphics{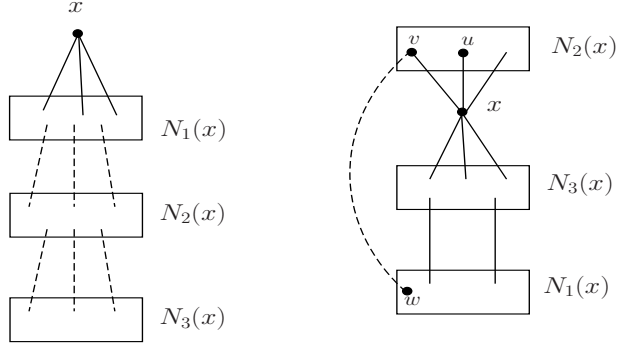}}\\
  \caption{$G$ and $\overline{G}$ with  $diam(G)= 3$}
\end{figure}

\begin{proof}
If $n_1=n_2=n_3=1$, then $G$ is a 4-path $P_4$, and so
$pc(\overline{G})=pc(P_4)=2$. Thus we consider the case that $n_2=1,
n_3\geq 2$. One can see that $\overline{G}$ contains a spanning
subgraph $G^*$ such that $G^*[N_0\cup N_1\cup N_3]$ is a complete
bipartite graph $K_{1+n_1,n_3}$. By Lemma \ref{lem2.4}, we know that
$pc(G^*)=2$. By corollary \ref{cor2.7}, we have that
$pc(\overline{G})=2$ since $n_2=1$. The remaining cases are: (1)
$n_1>1,n_2=n_3=1$, and (2) $n_2\geq 2$.

If $n_2=n_3=1$ and $n_1>1$, let $N_2=\{x_2\}$ and $N_3=\{x_3\}$, One
can see that $xv, x_2v\not \in E(\overline{G})$ for any $v\in N_1$.
Let $n_1'=|\{v\in N_1: N_G(v)\cap N_1=\emptyset\}|$. One can see
that there are $n_1'$ cut edges in $\overline{G}$ that is adjacent
to $x_3$. Hence, by Lemma \ref{lem2.2}. we have that
$pc(\overline{G})\geq n_1'$.

Furthermore, if $G$ is triangle-free, then $N_1$ is an independent set in $G$, and so a clique in $\overline{G}$. Now give color 1 to $x_2x$ and $x_3v$ for any $v\in N_1$ and color 2 to $xx_3$ and $uv$ for any $u,v\in N_2$. One can see that this coloring is a proper-path 2-coloring, thus $pc(\overline{G})=2$.

If $n_2\geq 2$, let $n_2'=|\{v\in N_2: deg_{\overline{G}}(v)=1\}|$,
and then there are $n_2'$ cut edges in $\overline{G}$ that is
adjacent to $x$. Hence, by Lemma \ref{lem2.2}, we have that
$pc(\overline{G})\geq n_2'$.

Furthermore, if $G$ is triangle-free, then $N_1$ is an independent set in $G$, and so a clique in $\overline{G}$. We give $\overline{G}$ an edge-coloring as follows: we give color 1 to $xx_2$ for any $x_2\in N_2$ and $x_1x_3$ for any $x_1\in N_1, x_3\in N_3$ and give color 2 to all the other edges in $\overline{G}$. Now we prove that this coloring is a proper-path 2-coloring.

It is obvious that for any $u\in N_i, v\in N_j$ with $i\not=j$, there is a proper path connecting them. It suffices to show that for any $u, v\in N_2$ or $N_3$ with $uv\not\in E(\overline{G})$, there is a proper path between them. In fact, as $G$ is triangle-free, if $uv\in E(G)$, one can see that there is a vertex $w\in N_1$ such that $wu\in E(G)$ and $wv\not\in E(G)$. Thus $P=uxx_3wv$ is a proper path connecting $u$ and $v$ in $\overline{G}$ where $x_3\in N_3$. Similarly, we can see that for any $u,v\in N_3$, there is a proper path between them. Thus we have that this coloring is a proper-path 2-coloring. So $pc(\overline{G})=2$.
\end{proof}

The following two corollaries clearly hold.
\begin{coro}\label{cor3.4}
For a graph $G$, if $\overline{G}$ is triangle-free and $diam(\overline{G})=3$, then $pc(G)=2$.
\end{coro}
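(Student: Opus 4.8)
The plan is to obtain this as an immediate consequence of the preceding theorem (the one treating $diam(G)=3$), applied with $\overline{G}$ playing the role of the graph ``$G$'' there. First I would check that the hypotheses of that theorem are met by $\overline{G}$: since $diam(\overline{G})=3$ is finite, $\overline{G}$ is connected, and by assumption $\overline{G}$ is triangle-free. Hence $\overline{G}$ is a connected triangle-free graph with $diam(\overline{G})=3$, which is precisely the situation addressed by the ``Furthermore'' clause of that theorem. Applying that clause to $\overline{G}$ gives $pc(\overline{\overline{G}})=2$, and since $\overline{\overline{G}}=G$, we conclude $pc(G)=2$.

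The one small point requiring attention is that $pc(G)$ is defined only when $G$ is connected, so I would first verify that $G$ is connected. This follows from the classical fact that the complement of a disconnected graph is connected with diameter at most $2$: if $G$ were disconnected we would have $diam(\overline{G})\le 2$, contradicting $diam(\overline{G})=3$. Thus $G$ is connected and $pc(G)$ is well defined; moreover $G$ is noncomplete (equivalently $\overline{G}$ has an edge), so the value is exactly $2$ rather than $1$.

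I do not anticipate any genuine obstacle here. The corollary is essentially a restatement of the theorem with the symmetric roles of $G$ and $\overline{G}$ interchanged, so the only bookkeeping is the double-complement identity and the well-definedness check above. In particular, no new edge-coloring needs to be constructed: the explicit proper-path $2$-coloring of $\overline{\overline{G}}=G$ is already furnished by the proof of the theorem, applied to $\overline{G}$.
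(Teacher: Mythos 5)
Your proposal is correct and matches the paper, which simply states that this corollary "clearly holds" as an immediate application of the preceding theorem with $\overline{G}$ in the role of the graph of diameter $3$ (the "Furthermore" clause), together with $\overline{\overline{G}}=G$. Your extra check that $G$ is connected (a disconnected graph has complement of diameter at most $2$, contradicting $diam(\overline{G})=3$) is a small point the paper leaves implicit, and it is handled correctly.
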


If $G$ is acyclic, it is apparent that $G$ is triangle-free. From Theorem \ref{th2.1} and corollary \ref{cor3.4}, we have the following result.
\begin{coro}
If $G$ is a tree with $diam(G)\ge 3$, then $pc(\overline{G})=2$.
\end{coro}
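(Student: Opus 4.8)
The plan is to split on the value of $\mathrm{diam}(G)$, which is at least $3$ by hypothesis, and in each case reduce to a result already proved in Section 3. First I would record two preliminary observations. Since $G$ is a nontrivial tree it has at least one edge, so $\overline{G}$ is not the complete graph; and $\overline{G}$ is connected, because a disconnected complement would force $G$ to contain a spanning complete multipartite subgraph and hence $\mathrm{diam}(G)\le 2$, contradicting $\mathrm{diam}(G)\ge 3$. Thus $pc(\overline{G})$ is well defined and, since $\overline{G}$ is connected and noncomplete, $pc(\overline{G})\ge 2$. It therefore remains only to establish the upper bound $pc(\overline{G})\le 2$.

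For the upper bound I would consider two cases. If $\mathrm{diam}(G)\ge 4$, then Theorem \ref{th2.1} applies verbatim and gives $pc(\overline{G})=2$. If $\mathrm{diam}(G)=3$, then $G$ is acyclic, hence triangle-free; I would apply Corollary \ref{cor3.4} to the graph $\overline{G}$, noting that its complement is $G$, which is triangle-free, and that $\mathrm{diam}$ of the complement of $\overline{G}$ equals $\mathrm{diam}(G)=3$. Corollary \ref{cor3.4} then yields $pc(\overline{G})=2$. Combining the two cases with the lower bound $pc(\overline{G})\ge 2$ established above completes the proof.

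Since the argument is purely a case split feeding into Theorem \ref{th2.1} and Corollary \ref{cor3.4}, there is no substantive obstacle. The only points requiring a moment's care are verifying that $\overline{G}$ is connected — so that the conclusions of those results, which presuppose a connected graph, are legitimately applicable — and correctly matching the hypotheses of Corollary \ref{cor3.4}: its diameter condition must be read for the complement of the graph to which it is applied (here $\overline{G}$), i.e.\ for $G$ itself, not for $\overline{G}$.
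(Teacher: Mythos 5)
Your proof is correct and matches the paper's own argument: the paper likewise observes that a tree is triangle-free and then invokes Theorem \ref{th2.1} for $diam(G)\ge 4$ and Corollary \ref{cor3.4} for $diam(G)=3$. Your extra care in checking that $\overline{G}$ is connected and noncomplete (for the lower bound and for applicability of the cited results) is sound, just more explicit than the paper.
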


\begin{theorem}\label{th3.6}
Let $G$ be a triangle-free graph with $diam(G)=2$. If $\overline{G}$ is connected, then $pc(\overline{G})=2$.
\end{theorem}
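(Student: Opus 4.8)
The plan is to exhibit a connected spanning subgraph of $\overline{G}$ that is bipartite and $2$-connected, and then apply Lemma~\ref{lem2.4} to conclude $pc(\overline{G})\le 2$; since $G$ is noncomplete (its diameter is $2$), $\overline{G}$ is not complete either, so $pc(\overline{G})\ge 2$ and equality follows. The natural bipartition to try is the one coming from a BFS layering of $\overline{G}$ from a well-chosen vertex, but since here we only know $diam(G)=2$, a cleaner route is to use the triangle-free hypothesis on $G$ directly: pick any vertex $x$, let $A=N_G(x)\cup\{x\}$ be the closed neighborhood of $x$ in $G$ and $B=V(G)\setminus A$. Because $G$ is triangle-free, $N_G(x)$ is an independent set in $G$, hence a clique in $\overline{G}$; and $x$ is adjacent in $\overline{G}$ to every vertex of $B$. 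The idea is to build the spanning bipartite subgraph $H$ of $\overline{G}$ with parts chosen so that all edges of $H$ are non-edges of $G$, using the structure that triangle-freeness forces.

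Concretely, I would proceed as follows. First, since $diam(G)=2$ and $G$ is triangle-free, for every vertex $v$ its neighborhood $N_G(v)$ is independent in $G$, so it induces a clique in $\overline{G}$; and any two nonadjacent vertices of $G$ have a common neighbor in $G$, which translates into structural facts about $\overline{G}$. Next, I would partition $V(\overline{G})$ into two sets $X,Y$ and verify that the bipartite graph $H=\overline{G}[X,Y]$ (all $\overline{G}$-edges between $X$ and $Y$) is spanning, connected, and $2$-connected; the most promising choice is $X=N_G(x)$ and $Y=V(G)\setminus N_G(x)$ (so $x\in Y$), because every vertex of $X$ is joined in $\overline{G}$ to $x\in Y$ (giving connectivity through $x$), and one then checks that removing any single vertex leaves $H$ connected. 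Finally, apply Lemma~\ref{lem2.4} to $H$, and then Lemma~\ref{lem2.1} to get $pc(\overline{G})\le pc(H)=2$; combined with $pc(\overline{G})\ge 2$ this yields the claim.

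The main obstacle will be verifying $2$-connectivity of the chosen spanning bipartite subgraph $H$ of $\overline{G}$. Connectivity is easy (everything in $X$ hangs off $x$, and every $y\in Y\setminus\{x\}$ must have a $\overline{G}$-neighbor in $X$, else $y$ would be $G$-adjacent to all of $N_G(x)$, which, using $diam(G)=2$ and triangle-freeness, can be ruled out or handled separately). The delicate point is showing there is no cut vertex: one must argue that $x$ is not a cut vertex of $H$ — equivalently, that $\overline{G}[X]$ together with the $X$–$(Y\setminus\{x\})$ edges is connected — and that no vertex of $X\cup(Y\setminus\{x\})$ is a cut vertex. Here the hypothesis "$\overline{G}$ is connected" is exactly what is needed to exclude the bad configurations; if $H$ as defined fails to be $2$-connected, a short case analysis (e.g.\ a vertex of $Y\setminus\{x\}$ with a unique $\overline{G}$-neighbor in $X$, or $X$ being a single vertex) should show that either $\overline{G}$ is disconnected, contradicting the hypothesis, or $G$ contains a triangle, contradicting triangle-freeness. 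I expect this case analysis, rather than the coloring itself, to be where the real work lies; once $H$ is shown to be bipartite and $2$-connected, Lemma~\ref{lem2.4} finishes the proof immediately.
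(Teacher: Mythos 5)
There is a genuine gap, and it occurs exactly where you predicted the ``real work'' would be. First, your concrete bipartition is internally inconsistent: with $X=N_G(x)$ and $Y=V(G)\setminus N_G(x)$, the vertex $x$ is adjacent in $\overline{G}$ to \emph{no} vertex of $X$ (by the definition of the complement, $x$'s non-neighbors in $\overline{G}$ are precisely $N_G(x)$), contradicting your claim that ``every vertex of $X$ is joined in $\overline{G}$ to $x$.'' Hence in your bipartite graph $H$ the vertex $x$ is isolated, so $H$ is not even a connected spanning subgraph, let alone $2$-connected, and Lemma~\ref{lem2.4} cannot be applied to it.

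Second, and more fundamentally, the strategy itself cannot be repaired by a better choice of parts or by a short case analysis: take $G=C_5$, which is triangle-free with $diam(G)=2$ and has $\overline{G}\cong C_5$ connected. An odd cycle contains no spanning bipartite $2$-connected subgraph (any spanning bipartite subgraph of $C_5$ must omit an edge and is therefore a union of paths), so there is no $H$ to which Lemma~\ref{lem2.4} applies, and even Corollary~\ref{cor2.7} cannot rescue the plan since there is no $2$-connected bipartite core to attach leftover vertices to; yet $pc(\overline{G})=pc(C_5)=2$, so the theorem is true but not by this route. The paper instead produces an explicit $2$-coloring of $\overline{G}$ directly: with $N_1=N_G(x)$ and $N_2=V\setminus(N_1\cup\{x\})$, color all $\overline{G}$-edges between $N_1$ and $N_2$ with color $1$ and all remaining edges (including the clique $\overline{G}[N_1]$ and the edges from $x$ to $N_2$) with color $2$, and then verifies proper connectivity pair by pair, using triangle-freeness to find the needed crossing edges (e.g.\ if $x_2,x_2'\in N_2$ are nonadjacent in $\overline{G}$ then $x_2x_2'\in E(G)$, and their $G$-neighbors $x_1,x_1'\in N_1$ give the proper path $x_2x_1'x_1x_2'$) and connectivity of $\overline{G}$ to supply an edge $uv$ with $u\in N_1$, $v\in N_2$ for the exceptional vertices of $N_2$ with no $\overline{G}$-neighbor in $N_1$. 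If you want to keep your framework, you would have to replace ``$2$-connected bipartite spanning subgraph'' by a direct coloring argument of this kind.
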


\begin{proof}
We choose a vertex $x$ with $ecc_G(x)=2$, and $N_i=\{v: dist(x,v)=i\}$ for $i=0,1,2$. One can see that $N_0=\{x\}, N_1=N_G(x), N_2=V\setminus (N_1\cup N_0)$. As $G$ is triangle-free, it is obvious that $N_1$ is a clique in $\overline{G}$. Note that $\overline{G}$ is connect, thus $|N_1|>1$ and there is at least one edge $uv\in E(\overline{G})$ such that $u\in N_1$ and $v\in N_2$.

We give $\overline{G}$ an edge-coloring as follows: we give color $1$ to the edges between $N_1$ and $N_2$, and give color $2$ to all the other edges in $\overline{G}$. Now we prove that this coloring is a proper-path coloring. First, we can easily find that there are proper paths between $x$ and any other vertices.  Also, there are  proper paths between $v$ and vertices in $N_1$.  For any $y\in N_2\setminus\{v\}$ and $z\in N_1$, if $N_{\overline{G}}(y)\cap N_1\neq\emptyset$, let $w\in N_{\overline{G}}(y)\cap N_1$.  Then $ywz$ is a proper path between $y$ and $z$. Otherwise, $N_{\overline{G}}(y)\cap N_1=\emptyset$. We claim that $y$ is adjacent to all the other vertices in $N_2$. In fact, for any vertex $w\in N_2\setminus y$, there exists a vertex $w'\in N_1$ such that $ww'\in E(G)$.
Since $yw'\in  E(G)$, we know that $yw\in E(\overline{G})$.  Especially, we know that $yv\in E(\overline{G})$. Then  $yvuz$ is a proper path between $y$ and $z$. Next consider $x_2, x_2'\in N_2$ such that $x_2x_2'\notin E(\overline{G})$. Since $x_2,x_2'\in N_2$, there are $x_1,x_1'\in N_1$ such that $x_1x_2,x_1'x_2'\in E(G)$. As $G$ is triangle-free, one can see that $x_1x_2', x_2x_1'\in E(\overline{G})$. So we have that $x_2x_1'x_1x_2'$ is a proper path connecting $x_1$ and $x_1'$.  Hence we have that $pc(\overline{G})=2$.
\end{proof}

\begin{pro}\label{pr3.1}
If $G$ is triangle-free and contains two connected components one of which is trivial, then $pc(\overline{G})=2$.
\end{pro}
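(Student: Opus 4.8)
The plan is to mimic the structure of the argument for Theorem~\ref{th3.6}, exploiting the fact that a triangle-free graph $G$ has all neighborhoods independent, so that these neighborhoods become cliques in $\overline{G}$. Write $G = H \cup \{z\}$, where $z$ is the trivial component and $H$ is the nontrivial (triangle-free) component. In $\overline{G}$, the vertex $z$ is adjacent to every vertex of $H$, so $\overline{G}$ is connected and in fact $z$ is a dominating vertex; this already gives $pc(\overline{G}) \le 2$ in many subcases via Lemma~\ref{lem2.4} and Corollary~\ref{cor2.7}, but to be safe I would give an explicit $2$-coloring. Also $pc(\overline G)\ne 1$ since $\overline G$ is not complete ($G$ has at least one edge, as $G$ triangle-free with a nontrivial component still could be a single edge — but even $K_2\cup\{z\}$ has complement $P_3$, with $pc=2$), so it suffices to produce a proper-path $2$-coloring.

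The key steps, in order: (1) Fix a vertex $x$ of maximum degree in $H$ (or any vertex of $H$), and partition $V(H)$ into $N_0=\{x\}$, $N_1=N_H(x)$, and $N_2=V(H)\setminus(N_0\cup N_1)$; note $z$ is adjacent in $\overline G$ to all of $N_0\cup N_1\cup N_2$. Since $G$ is triangle-free, $N_1$ is independent in $G$, hence a clique in $\overline G$, and $x$ is adjacent in $\overline G$ to every vertex of $N_2$. (2) Color the edges of $\overline G$: assign color $1$ to all edges incident with $z$ and to all edges between $N_1$ and $N_2$ in $\overline G$; assign color $2$ to everything else (the edges inside $N_1$, inside $N_2$, and the edges $xN_2$). (3) Verify proper-connectivity: $z$ reaches any vertex by a single edge; within $N_1$ (a clique colored $2$, plus the hub $x$) any two vertices are joined directly or via $x$; a vertex $y\in N_2$ reaches $x$ directly (color $2$) and reaches $N_1$ either directly across the bipartite color-$1$ edges or, if $N_{\overline G}(y)\cap N_1=\emptyset$, then as in the proof of Theorem~\ref{th3.6} $y$ is adjacent in $\overline G$ to all other vertices of $N_2$ and one routes through such a neighbor; for two nonadjacent $x_2,x_2'\in N_2$, pick $x_1,x_1'\in N_1$ with $x_1x_2,x_1'x_2'\in E(G)$, use triangle-freeness to get $x_1x_2',x_1'x_2\in E(\overline G)$, and take the path $x_2\,x_1'\,x_1\,x_2'$ which alternates $1,2,1$. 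Finally, to connect $z$-side to $H$-side there is nothing extra to check since $z$ is universal in $\overline G$.

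The main obstacle I anticipate is the bookkeeping around vertices $y\in N_2$ with no $\overline G$-neighbor in $N_1$: one must re-derive (exactly as in Theorem~\ref{th3.6}) that such a $y$ is then adjacent in $\overline G$ to every other vertex of $N_2$, and in particular to a vertex that \emph{does} have a neighbor in $N_1$, so that a proper path of length $3$ to $N_1$ exists; and similarly one must make sure the color assignment at the hub $x$ (edges $xN_1$ colored $2$, edges $xN_2$ colored $2$) does not create a monochromatic obstruction — here the presence of $z$ as a color-$1$ universal vertex provides an alternate first edge whenever the $N_1$–$N_2$ routing through $x$ would repeat color $2$. Once these two points are dispatched, the rest is routine, and we conclude $pc(\overline G)=2$.
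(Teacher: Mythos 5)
Your proposed coloring has a genuine gap, and it shows up on the smallest examples. In $\overline{G}$ the hub $x$ is adjacent to \emph{no} vertex of $N_1$ (those are exactly the $G$-neighbours of $x$), so your verification step ``within $N_1$ (a clique colored $2$, plus the hub $x$) any two vertices are joined directly or via $x$'' rests on a false adjacency, and the pairs $\{x,w\}$ with $w\in N_1$ are never actually handled. Worse, since you give colour $1$ to \emph{every} edge incident with $z$, the vertex $z$ can never be an interior vertex of a proper path (any path through $z$ uses two colour-$1$ edges at $z$), so your remark that $z$ ``provides an alternate first edge'' buys nothing; such a pair $\{x,w\}$ must be joined inside $\overline{G}-z=\overline{H}$, where the only exits from $x$ lead into $N_2$. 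Take $G=K_{1,3}\cup K_1$: then $N_2=\emptyset$, $x$ is isolated in $\overline{H}$, and under your coloring every path leaving $x$ starts with the colour-$1$ edge $xz$ followed by another colour-$1$ edge, so $x$ has no proper path to any leaf, although $pc(\overline{G})=2$ (just use both colours on the $z$-edges). The same failure occurs for $G=C_4\cup K_1$, where the unique vertex of $N_2$ is $G$-adjacent to all of $N_1$. There is a second, independent gap: your imitation of Theorem~\ref{th3.6} uses that every vertex of $N_2$ has a $G$-neighbour in $N_1$ (``pick $x_1,x_1'\in N_1$ with $x_1x_2,x_1'x_2'\in E(G)$''), which is precisely the diameter-$2$ hypothesis of that theorem; choosing ``any vertex $x$ of $H$'' (or one of maximum degree) gives no such guarantee, since here $N_2$ contains all vertices at distance at least $2$ from $x$.

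The repair is either to spend both colours on the edges at $z$ so that $z$ can genuinely serve as an interior vertex, or, more simply, to reduce to what is already proved, which is what the paper does: write $\overline{G}=K_1\vee\overline{H}$ with $z$ universal. If $\overline{H}$ is connected, then $H$ is a connected triangle-free graph with $diam(H)\ge 2$, so $pc(\overline{H})=2$ by Theorem~\ref{th2.1}, Corollary~\ref{cor3.4} or Theorem~\ref{th3.6} according to the diameter, and the universal vertex $z$ is then easily absorbed. If $\overline{H}$ is disconnected, triangle-freeness forces $H$ to be complete bipartite, so $\overline{H}$ is a disjoint union of two cliques and $\overline{G}$, being two cliques joined through $z$, admits an obvious proper-path $2$-colouring. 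Your direct-coloring route could be saved along these lines, but as written the colouring and its verification do not establish the proposition.
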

\begin{proof}
Let $G_1$ and $G_2$ be the two components of $G$ such that $V(G_1)=\{v\}$.
Then $\overline{G}=\overline{G_1}\vee \overline{G_2}$, where ``$\vee$" is the join of two graphs, that is, vertex $v$ is adjacent to all the other vertices in $\overline{G}$. If $\overline{G_2}$ is connected, then $pc(\overline{G_2})=2$ from Theorem \ref{th2.1}, Corollary \ref{cor3.4} and Theorem \ref{th3.6}. Hence, we can get that $pc(\overline{G})=2$. Otherwise, $\overline{G_2}$ is disconnected.  Since $G$ is triangle-free, we know that  $\overline{G_2}$ has two connected components, and both of them are cliques of $\overline{G_2}$. We can easily find a proper-path 2-coloring for $\overline{G}$. Hence $pc(\overline{G})=2$. We complete the proof.
\end{proof}

\begin{theorem} For a connected noncomplete graph $G$, if $\overline{G}$ is triangle-free, then $pc(G)=2$.
\end{theorem}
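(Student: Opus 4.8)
The plan is to run a case analysis on the structure of $\overline{G}$, routing each case to a result already established in Sections~2 and~3. Throughout, observe first that since $G$ is connected and noncomplete we have $n\ge 3$, so $G$ has at least one edge, $\overline{G}$ is itself noncomplete, and $pc(G)\ge 2$ because $pc(G)=1$ holds only for $K_n$. Hence in every case it is enough to prove $pc(G)\le 2$, and the equality $pc(G)=2$ follows.

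First I would handle the case in which $\overline{G}$ is connected. Since $\overline{G}$ is noncomplete we have $diam(\overline{G})\ge 2$, and I would split according to whether $diam(\overline{G})$ is $2$, is $3$, or is at least $4$. If $diam(\overline{G})\ge 4$, apply Theorem~\ref{th2.1} with $\overline{G}$ playing the role of $G$; since $\overline{\overline{G}}=G$, this gives $pc(G)=2$. If $diam(\overline{G})=3$, the hypothesis that $\overline{G}$ is triangle-free lets me invoke Corollary~\ref{cor3.4} directly. If $diam(\overline{G})=2$, then $\overline{G}$ is a triangle-free graph of diameter $2$ whose complement $G$ is connected, so Theorem~\ref{th3.6} applies and yields $pc(G)=2$.

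Next I would treat the case in which $\overline{G}$ is disconnected, with components $\overline{G}_1,\dots,\overline{G}_h$ ($h\ge 2$) of orders $n_1,\dots,n_h$. Then $G$ contains the complete multipartite graph $K_{n_1,\dots,n_h}$ as a spanning subgraph, so by Lemma~\ref{lem2.1} it suffices to bound the proper connection number of that multipartite graph. If $h\ge 3$: either some $n_i\ge 2$, and then Corollary~\ref{cor2.5} gives proper connection number $2$; or all $n_i=1$, and then $G\supseteq K_n$, contradicting noncompleteness. If $h=2$ with both $n_1,n_2\ge 2$, then $K_{n_1,n_2}$ is $2$-connected bipartite, so its proper connection number is $2$ by Lemma~\ref{lem2.4}. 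The remaining subcase, $h=2$ with (say) $n_1=1$, is precisely the hypothesis of Proposition~\ref{pr3.1} applied to $\overline{G}$: the graph $\overline{G}$ is triangle-free and has two components, one of which is trivial, so $pc(\overline{\overline{G}})=pc(G)=2$.

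I do not anticipate a genuine obstacle: the statement is essentially a bookkeeping synthesis of the earlier results, and the only points needing care are verifying that the case distinction is exhaustive and that the boundary instances ($G=K_n$, the disconnected-$\overline{G}$-with-a-trivial-component case, and $diam(\overline{G})\le 1$) are correctly excluded or sent to Proposition~\ref{pr3.1}. The mild subtlety is simply remembering to apply Theorem~\ref{th2.1}, Theorem~\ref{th3.6}, and Proposition~\ref{pr3.1} with $G$ and $\overline{G}$ interchanged, using $\overline{\overline{G}}=G$.
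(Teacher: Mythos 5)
Your proposal is correct and follows essentially the same route as the paper: split on whether $\overline{G}$ is connected, handle the connected case by diameter ($\ge 4$ via Theorem~\ref{th2.1}, $=3$ via Corollary~\ref{cor3.4}, $=2$ via Theorem~\ref{th3.6}), and the disconnected case via Proposition~\ref{pr3.1} when one component is trivial and via Lemma~\ref{lem2.4} and Corollary~\ref{cor2.5} otherwise. Your write-up is in fact slightly more careful than the paper's (explicit exhaustiveness of the diameter cases and the all-singleton-parts degeneracy), but the underlying argument is identical.
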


\begin{proof}
We consider the following two cases:

\textbf{Case 1}. $\overline{G}$ is connected.

The result holds for the case $diam(\overline{G})\le 4$ from Theorem
\ref{th2.1}, the case $diam(\overline{G})=3$ from Corollary
\ref{cor3.4} and the case $diam(\overline{G}) = 2$ from Theorem
\ref{th3.6}.

\textbf{Case 2}. $\overline{G}$ is disconnected.

The result holds for the case that $\overline{G}$ contains two
connected components with one of them trivial from Proposition
\ref{pr3.1}, and holds for the remaining case from Lemma
\ref{lem2.4} and Corollary \ref{cor2.5}.
\end{proof}

\section{Nordhaus-Gaddum-type theorem for proper connection number of graphs}

Firstly, we characterize the graphs on $n$ vertices that have proper
connection number $n-2$. This result is crucial to investigate the
Nordhaus-Gaddum-type result for the proper connection number of $G$.
We use $C_n,S_n$ to denote the cycle and the star graph on $n$
vertices, respectively, and use $T(a,b)$ to denote the double star
in which the degrees of its two center vertices are $a$ and $b$
respectively. For a nontrivial graph $G$ for which $G+uv=G+xy$ for
every two pairs $\{u, v\}$, $\{x, y\}$ of nonadjacent vertices of
$G$, the graph $G+e$ is obtained from $G$ by adding the edge $e$
joining two nonadjacent vertices of $G$.

\begin{theorem}\label{NG1}
Let $G$ be a connected graph on $n$ vertices. Then $pc(G)=n-2$ if
and only if $G$ is one of the following $6$ graphs:
$T(2,n-2),C_3,C_4,C_4+e,S_4+e,S_5+e$.
\end{theorem}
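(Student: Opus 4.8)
The plan is to prove both directions, with the bulk of the work going into the forward direction. The sufficiency direction is routine: for each of the six listed graphs one checks directly that $pc=n-2$. For $T(2,n-2)$ this follows from Lemma~\ref{lem2.3} since $T(2,n-2)$ is a tree with $\Delta=n-2$. For $C_3$ we have $n=3$ and $pc(C_3)=1=n-2$; for $C_4$ we have $pc(C_4)=2=n-2$ (it is a connected bipartite graph, but not complete, so $pc\ge 2$, and a proper $2$-coloring exists). For $C_4+e$, $S_4+e$, $S_5+e$ one exhibits an explicit optimal coloring and a matching lower bound from Lemma~\ref{lem2.2} on the number of bridges at a single vertex. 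I would collect these small verifications into one short paragraph.

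For the necessity direction, assume $pc(G)=n-2$. The first reduction is to bound $n$. If $G$ is $2$-connected, then $pc(G)\le 3$ by Lemma~\ref{lem2.6}, forcing $n\le 5$; I would then enumerate the $2$-connected graphs on at most $5$ vertices with $pc=n-2$, obtaining $C_3$ ($n=3$), $C_4$ and $C_4+e$ ($n=4$), and checking that no $2$-connected graph on $5$ vertices has $pc=3$ except... (here one must be careful — $C_5$ has $pc(C_5)=2\ne 3$, and $K_5$ has $pc=1$; a quick case analysis shows no $2$-connected graph on $5$ vertices attains $pc=3$, so nothing new arises there, while $S_4+e$ and $S_5+e$ are \emph{not} $2$-connected and come from the next case). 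So the remaining case is that $G$ has a cut vertex. Then $G$ has a spanning tree $T$, and by Lemma~\ref{lem2.1}, $n-2=pc(G)\le pc(T)=\Delta(T)\le n-1$; moreover if $\Delta(T)=n-1$ then $T=S_n$ and $pc(G)\le pc(S_n)$, but adding any edge to $S_n$ already drops the proper connection number, so one shows $\Delta(T)=n-2$ is forced (or $G$ is itself a tree with $\Delta=n-2$, i.e.\ $T(2,n-2)$ or $S_n$—and $S_n$ has $pc=n-1\ne n-2$, while $T(2,n-2)$ works).

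The main obstacle, and where I would spend the most effort, is the case analysis when $G$ has a cut vertex but is not a tree: here $G$ properly contains a spanning tree $T$ with $\Delta(T)=n-2$, so $T$ is either $S_n$ or $T(2,n-2)$, and $G$ is obtained by adding at least one edge. The key tool is Lemma~\ref{lem2.2}: if $G$ still has $b$ bridges at a common vertex then $pc(G)\ge b$, so to have $pc(G)=n-2$ we need to control how many edges can be added before the bridge count at the high-degree vertex drops below $n-2$. Adding a single edge to $S_n$ yields $S_n+e$, which is $T(2,n-2)$ with one extra edge between the two leaves adjacent to... — one computes $pc(S_n+e)$ directly and finds it equals $n-2$ only for small $n$, namely $n=4$ ($S_4+e$) and $n=5$ ($S_5+e$); for $n\ge 6$ one shows $pc(S_n+e)\le n-3$ by exhibiting a coloring, using that the triangle created gives extra routing flexibility (via Corollary~\ref{cor2.7}-type arguments, treating the low-degree leaves as attachable vertices). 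Similarly, adding an edge to $T(2,n-2)$ must be checked: one shows any such $G$ has $pc\le n-3$ for $n$ large, again by an explicit coloring exploiting the created cycle. Finally one must rule out adding two or more edges, and handle $C_4+e=K_4-e$ and its relation to $S_4+e$ (they may coincide or be distinct on $4$ vertices — careful bookkeeping needed). Assembling these sub-cases and verifying the small exceptional graphs by hand is the crux; the rest is bookkeeping with Lemmas~\ref{lem2.1}, \ref{lem2.2} and~\ref{lem2.6}.
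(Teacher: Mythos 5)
Your overall strategy is viable and genuinely different from the paper's: you split on $2$-connectivity (using Lemma~\ref{lem2.6} to force $n\le 5$ in the $2$-connected case) and, in the cut-vertex case, use Lemma~\ref{lem2.1} plus Lemma~\ref{lem2.3} to conclude that \emph{every} spanning tree has maximum degree at least $n-2$, so $G$ is $S_n$ or $T(2,n-2)$ with extra edges; the paper instead takes a spanning unicyclic subgraph whose cycle is a longest cycle of $G$ and argues by cycle length ($k\ge 5$ gives $pc(G)\le 2+n-k<n-2$; $k=4$ gives a spanning tree of maximum degree at most $n-3$ unless $n=4$; $k=3$ with $n\ge 6$ is handled by reusing three of the $n-3$ cut-edge colors on the triangle), which disposes of the hard case in a few lines. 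However, as written your argument stops exactly where the content of the theorem lies. The statements $pc(S_n+e)\le n-3$ for $n\ge 6$, $pc\bigl(T(2,n-2)+e\bigr)\le n-3$ for \emph{every} position of the added edge $e$ (inside $N(u)$ touching or not touching the support vertex of the pendant path, or joining the far leaf to a second neighbor of $u$), the claim that no $2$-connected graph on $5$ vertices has $pc=3$, and the $n=4,5$ cut-vertex enumerations are all asserted (``one shows'', ``a quick case analysis'', ``careful bookkeeping'') with no colorings or enumeration supplied; you even leave open whether $C_4+e$ and $S_4+e$ coincide (they are distinct: the former is $K_4$ minus an edge and $2$-connected, the latter has a cut vertex). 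These verifications are the crux you name but do not carry out, so the proof is incomplete at its decisive step.

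Two further concrete points. First, your sufficiency argument for $S_5+e$ cannot rest on Lemma~\ref{lem2.2}: the center of $S_5+e$ is incident with only two bridges, so the lemma gives $pc\ge 2$, not the needed $pc\ge 3$; a short ad hoc argument (ruling out a proper-path $2$-coloring of the two pendant edges together with the triangle) is required there. Second, a simplification you missed: ``ruling out two or more added edges'' is not a separate difficulty, because by Lemma~\ref{lem2.1} adding edges can only decrease $pc$; hence once $pc(T+e)\le n-3$ is established for every single-edge augmentation of $T\in\{S_n,\,T(2,n-2)\}$ when $n\ge 6$, any graph containing such a $T+e$ as a spanning subgraph inherits the bound, and only the finitely many graphs on $4$ and $5$ vertices remain to be checked by hand. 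With the explicit colorings and these finite checks supplied, your route does close, but they are the substance of Theorem~\ref{NG1}, not bookkeeping.
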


\begin{proof}
Let $G$ be one of the above 6 graphs. We can easily check that
$pc(G)=n-2$. So it remains to verify the converse. If $G$ is
acyclic, from Lemma \ref{lem2.3}, we know that $G=T(2,n-2)$. Suppose
that $G$ contains cycles. Let $G^*$ be a spanning unicycle subgraph
of $G$ such that the cycle $C$ in $G^*$ is the longest cycle in $G$.
Without loss of generality, assume that $C=v_1v_2\ldots v_kv_1$ and
$d_G(v_1)\geq d_G(v_i)$ for $i=2,3\cdots, l$. As $pc(C)=2$ for all
$k\geq 4$, we can see that $pc(G)\leq pc(G^*)\leq 2+n-k<n-2$ if
$k>4$, contradicting with the fact that $pc(G)=n-2$. So we only need
to consider that $k=3$ or $k=4$.

If $k=4$, let $G_1=G^*-v_1v_2$. One can see that $G_1$ is a spanning
tree of $G$, with $\Delta(G_1)\leq n-3$ unless $n=4$. So by Lemma
\ref{lem2.1}, $pc(G)\leq pc(G_1)\leq n-3$, contradicting the fact
that $pc(G)=n-2$. So $n=4$ and $G^*=C_4$. Hence, $G=C_4$ or
$G=C_4+e$ since the longest cycle of $G$ is of length $4$.

Now we consider the case $k=3$. Let $c$ be an edge coloring of $G^*$
such that the cut edges are colored by $n-3$ distinct colors. If
$n\ge 6$, that is, $G^*$ has more than three cut edges, choose three
colors that have been used on the cut edges, say $1,2,3$. Let
$c(v_1v_2)=1$, $c(v_2v_3)=2$ and $c(v_3v_1)=3$. We know that $G^*$
is proper connected under edge-coloring $c$. Hence, $pc(G)\leq
pc(G^*)\le n-3$, contradicting the fact that $pc(G)=n-2$. So $n\leq
5$. If $n=5$, one can see that $G\cong S_5+e$, since otherwise,
there is a spanning $P_5$ in $G$, then $pc(G)\leq pc(P_5)=2$, a
contradiction. If $n=4$, one can see that $G\cong S_4+e$, since
otherwise there exists a cycle of length 4 in $G$ which
contradicting the assumption $k=3$. If $n=3$, $G\cong C_3$ as
$pc(G)=1$ if and only if $G$ is complete graph. Hence we have that
if $k=3$,$G=C_3$, or $G=S_4+e$, or $G=S_5+e$.
\end{proof}

We know that if $G$ is a connected graph with $n$ vertices, then the
number of the edges in $G$ must be at least $n-1$. If both $G$ and
$\overline{G}$ are connected, then $n$ is at least $4$, and
$\Delta(G)\le n-2$. Therefore, we know that $2\le pc(G)\le n-2$.
Similarly, $2\le pc(\overline{G})\le n-2$. Hence we can obtain that
$4\le pc(G)+pc(\overline{G})\le 2(n-2)$. For $n=4$, we can easily
get that $pc(G)+pc(\overline{G})=4$ if $G$ and $\overline{G}$  are
connected. In the rest of the paper, we always assume that all
graphs have at least $5$ vertices, and both $G$ and $\overline{G}$
are connected.
\begin{lemma}\label{lem4.2}
If $n=5$, then
\begin{equation*}
pc(G)+pc(\overline{G})=
\left\{
  \begin{array}{ll}
   5 & \hbox{if $G\cong T(2,n-2)$ or $\overline{G}\cong T(2,n-2)$,} \\
   4 & \hbox{otherwise}.
\end{array}
\right.
\end{equation*}
\end{lemma}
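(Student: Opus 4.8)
The plan is to combine Theorem~\ref{NG1} with the bounds recorded just above the lemma, namely $2\le pc(G)\le n-2$ and $2\le pc(\overline G)\le n-2$; for $n=5$ these read $2\le pc(G),pc(\overline G)\le 3$, so $4\le pc(G)+pc(\overline G)\le 6$, and it suffices to decide, separately for $G$ and for $\overline G$, exactly when the extreme value $3=n-2$ is attained. By Theorem~\ref{NG1}, a connected graph $H$ on $n$ vertices has $pc(H)=n-2$ precisely when $H$ lies in the list $\{T(2,n-2),C_3,C_4,C_4+e,S_4+e,S_5+e\}$, and among these the only ones with exactly $5$ vertices are $T(2,3)$ and $S_5+e$.

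Next I would eliminate $S_5+e$. Since $S_5+e$ has a vertex of degree $n-1=4$ (the centre of the underlying star), its complement has an isolated vertex and is therefore disconnected; as we are assuming throughout this section that both $G$ and $\overline G$ are connected, neither $G$ nor $\overline G$ can be $S_5+e$. Hence, under our hypotheses, $pc(G)=3$ if and only if $G\cong T(2,3)$, and likewise $pc(\overline G)=3$ if and only if $\overline G\cong T(2,3)$. Because $T(2,3)$ has $4$ edges while its complement has $6$, the graph $T(2,3)$ is not self-complementary, so $G$ and $\overline G$ cannot both be $T(2,3)$; this already rules out the value $6$. It also shows that $pc(G)+pc(\overline G)=5$ exactly in the two mutually exclusive cases $G\cong T(2,3)$ and $\overline G\cong T(2,3)$, and that $pc(G)+pc(\overline G)=4$ in every other case — provided the claimed values are genuinely attained, which I check next.

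Finally I would do the short verifications. If $G\cong T(2,3)$ then $pc(G)=\Delta(T(2,3))=3$ by Lemma~\ref{lem2.3}. For $\overline G=\overline{T(2,3)}$: writing $a,b$ for the two centres with $a$ of degree $2$ and $b$ of degree $3$, $a_1$ for the leaf at $a$, and $b_1,b_2$ for the leaves at $b$, one checks that the path $b\,a_1\,b_1\,a\,b_2$ uses only non-edges of $T(2,3)$, hence is a spanning path $P_5$ of $\overline{T(2,3)}$; so $\overline{T(2,3)}$ is connected and non-complete, and by Lemma~\ref{lem2.1} $pc(\overline G)\le pc(P_5)=2$, while $pc(\overline G)\ge 2$, giving $pc(\overline G)=2$ and the sum $5$. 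The case $\overline G\cong T(2,3)$ is symmetric. In the remaining case neither $G$ nor $\overline G$ is isomorphic to $T(2,3)$ or $S_5+e$, so by the previous paragraph $pc(G)=pc(\overline G)=2$ and the sum is $4$. There is no real obstacle here: the whole argument is a finite case analysis, and the only two points that need a moment's attention are noticing that $S_5+e$ is excluded because its complement is disconnected, and checking directly that $\overline{T(2,3)}$ is connected with proper connection number $2$.
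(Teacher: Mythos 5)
Your proposal is correct and follows essentially the same route as the paper: both arguments combine Theorem~\ref{NG1} with the bounds $2\le pc(G),pc(\overline{G})\le n-2=3$ to conclude that $T(2,3)$ is the only admissible graph with proper connection number $3$, forcing the sum to be $5$ or $4$ accordingly. You simply spell out details the paper dismisses as ``clearly'' holding --- excluding $S_5+e$ via the disconnected complement, noting $T(2,3)$ is not self-complementary, and exhibiting the spanning $P_5$ in $\overline{T(2,3)}$ --- which is sound but not a different method.
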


\begin{proof}
If $G\cong T(2,n-2)$ or $\overline{G}\cong T(2,n-2)$, then
$pc(G)+pc(\overline{G})=5$ clearly holds. From Theorem \ref{NG1}, we
know that $T(2,n-2)$ is the only graph on $5$ vertices that has
proper connection number $3$.  Since $2\le pc(G)\le 3$ and $2\le
pc(\overline{G})\le 3$, then all the other graphs considered here on
$5$ vertices has  proper connection number $2$. Hence
$pc(G)+pc(\overline{G})=4$ if $G\ncong T(2,n-2)$ and
$\overline{G}\ncong T(2,n-2)$.
\end{proof}

\begin{theorem}
$pc(G)+pc(\overline{G})\leq n $ for $n\geq 5$, and the equality
holds if and only if $G\cong T(2,n-2)$ or $\overline{G}\cong
T(2,n-2)$.
\end{theorem}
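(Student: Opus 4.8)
\noindent The plan is to use the results of Sections~2 and~3 to force one of $G,\overline G$ to have small proper connection number, and then to read off the equality cases from the classification of $pc=n-2$ graphs in Theorem~\ref{NG1}.

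For $n=5$ the statement is precisely Lemma~\ref{lem4.2}, so assume $n\ge 6$. By Theorem~\ref{NG1}, on $n\ge 6$ vertices the only graph with proper connection number $n-2$ is $T(2,n-2)$, which is a tree (hence triangle-free) and is not a star, so $\overline{T(2,n-2)}$ is connected. Applying the Section~3 theorem that gives $pc(H)=2$ whenever $H$ is connected and noncomplete and $\overline H$ is triangle-free, we obtain $pc(\overline{T(2,n-2)})=2$, and more generally $pc(\overline G)=2$ whenever $G$ is triangle-free and $pc(G)=2$ whenever $\overline G$ is triangle-free. In each of these cases $pc(G)+pc(\overline G)\le 2+(n-2)=n$, and by Theorem~\ref{NG1} equality holds only when the side of proper connection number $n-2$ is $T(2,n-2)$; conversely $G\cong T(2,n-2)$ (or $\overline G\cong T(2,n-2)$) gives $pc(G)+pc(\overline G)=(n-2)+2=n$. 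Hence it remains to treat the case in which both $G$ and $\overline G$ contain a triangle; since $T(2,n-2)$ is triangle-free, neither graph is then $T(2,n-2)$, so Theorem~\ref{NG1} gives $pc(G)\le n-3$ and $pc(\overline G)\le n-3$, and it suffices to prove the strict bound $pc(G)+pc(\overline G)\le n-1$.

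I would cut this remaining case down by diameter. If $diam(G)\ge 4$, then $pc(\overline G)=2$ by Theorem~\ref{th2.1}, so $pc(G)+pc(\overline G)\le(n-3)+2=n-1$; the case $diam(\overline G)\ge 4$ is symmetric. So assume $diam(G),diam(\overline G)\in\{2,3\}$. If $diam(G)=3$, then by the Section~3 theorem on graphs of diameter $3$ we have $pc(\overline G)=2$ (and are done) unless $G$ lies in one of its two ``remaining'' configurations, namely $n_1>1,\ n_2=n_3=1$ or $n_2\ge 2$; the analogous statement holds with $G$ and $\overline G$ interchanged when $diam(\overline G)=3$. Note also that Theorem~\ref{th3.6} cannot be invoked here, because neither side is triangle-free. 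Thus the only situation that genuinely remains open is: $diam(G),diam(\overline G)\in\{2,3\}$, both $G$ and $\overline G$ contain a triangle, and whenever a diameter equals $3$ the corresponding graph sits in one of those two special configurations.

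In each of these remaining subcases I would prove, by a hands-on argument, that at least one of $G,\overline G$ admits a proper $2$-colouring; combined with $pc\le n-3$ for the other side (established above) this yields $pc(G)+pc(\overline G)\le 2+(n-3)=n-1$. The natural tools are Hamiltonian paths, which force $pc=2$ and are easy to build when the configuration supplies a vertex adjacent to all but one vertex; and, where no Hamiltonian path is available, explicit proper $2$-colourings obtained from low-maximum-degree spanning trees together with Lemmas~\ref{lem2.1}, \ref{lem2.2} and \ref{lem2.6}. This step---showing that $pc(G)\ge 3$ and $pc(\overline G)\ge 3$ cannot occur simultaneously for connected complementary graphs with a triangle on each side---is the main obstacle, being the only part not handled by the black-box results of Sections~2--3. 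Granting it, the proof closes exactly as in the second paragraph: $pc(G)+pc(\overline G)\le n$, with equality if and only if the side of proper connection number $n-2$ is $T(2,n-2)$, that is, if and only if $G\cong T(2,n-2)$ or $\overline G\cong T(2,n-2)$; and since these graphs do attain $n$, the characterization follows.
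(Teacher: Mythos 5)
Your reduction is sound and matches the paper's opening moves: handle $n=5$ via Lemma~\ref{lem4.2}, observe that for $n\ge 6$ the only graph on $n$ vertices with $pc=n-2$ is $T(2,n-2)$ (Theorem~\ref{NG1}), verify equality for $T(2,n-2)$ via its triangle-free complementary structure, and reduce to showing $pc(G)+pc(\overline G)<n$ when neither graph is $T(2,n-2)$ (so both have $pc\le n-3$). But the proof then stops exactly where the real work begins. Your entire treatment of the remaining case --- both graphs containing a triangle, both diameters in $\{2,3\}$ --- is the sentence ``I would prove, by a hands-on argument, that at least one of $G,\overline G$ admits a proper $2$-colouring,'' followed by ``Granting it, the proof closes.'' That granted claim is the theorem's main content, it is not established by anything in Sections~2--3, and it is in fact \emph{stronger} than what the paper proves: the paper never shows that $\min\{pc(G),pc(\overline G)\}=2$ in all remaining cases. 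For instance, when both $G$ and $\overline G$ are $2$-connected and $n\ge 7$, the paper only invokes Lemma~\ref{lem2.6} to get $pc\le 3$ on each side and concludes $pc(G)+pc(\overline G)\le 6<n$; and in the single-pendant-edge subcase it settles for $pc(G)\le 3$ and $pc(\overline G)\le\max\{3,n-4\}$. So your target intermediate claim may be harder (or simply unnecessary) compared with what is actually needed.

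Your proposed route for closing the gap --- splitting by diameter --- is also unlikely to work as stated. The Section~3 theorem on $diam=3$ explicitly has ``remaining cases'' in which $pc$ of the complement can be as large as the number of cut edges at a vertex, and the $diam=2$ result (Theorem~\ref{th3.6}) requires triangle-freeness, which you have excluded in the residual case. The paper's actual mechanism is connectivity, not diameter: if both graphs are $2$-connected, Lemma~\ref{lem2.6} caps each $pc$ at $3$ (with an ad hoc argument for $n=6$); if $G$ has a cut vertex $u$, then $\overline G-u$ contains a spanning complete multipartite or complete bipartite graph, so Lemma~\ref{lem2.4}, Corollary~\ref{cor2.5} and Corollary~\ref{cor2.7} give $pc(\overline G)\le 2$, except in the delicate subcase of a single pendant edge, which requires an explicit coloring of $\overline G$ built on a spanning double star. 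Without some version of this case analysis, the proof is incomplete.
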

\begin{proof}
By Lemma \ref{lem4.2}, we can see that the result holds if $n=5$. So
we consider $n\geq 6$. If $G\cong T(2,n-2)$, $\overline{G}$ contains
a spanning subgraph $H$ that is obtained by attaching a pendent edge
to the complete bipartite graph $K_{2,n-3}$. Hence we have that
$pc(G)=n-2$ and $pc(\overline{G})=2$. The result clearly holds.
Similarly, we can also get $pc(G)+pc(\overline{G})=n$ if
$\overline{G}\cong T(2,n-2)$. To prove our conclusion, we only need
to show that $pc(G)+pc(\overline{G})< n $ if $G\ncong T(2,n-2)$ and
$\overline{G}\ncong T(2,n-2)$. Under this assumption, we know that
$2\le pc(G)\le n-3$ and $2\le pc(\overline{G})\le n-3$ by Theorem
\ref{NG1}.

Suppose first that both $G$ and $\overline{G}$ are $2$-connected.
For $n=6$, we claim that $pc(G)=2$. Suppose that the circumference
of $G$ is $k$. If $k=6$, one has that $pc(G)\leq pc(C_6)=2$. If the
$k=4$, one can see that $G$ contains a spanning $K_{2,4}$,
contradicting the fact that $\overline{G}$ is $2$-connected. Assume
that $G$ contains a $5$-cycle $C=v_1v_2v_3v_4v_5v_1$, we know that
the vertex $v_6$ is adjacent to two vertices that is nonadjacent in
$C$, say $v_1,v_3$. We give a $2$-edge coloring $c$ to this spanning
subgraph $H$ of $G$ as follows. Let
$c(v_1v_2)=c(v_3v_4)=c(v_5v_1)=c(v_3v_6)=1$, and the other edges
color $2$.  One can see that $G$ is proper connected. Hence
$pc(G)=2$. Similarly, $pc(\overline{G})=2$. So we have that
$pc(G)+pc(\overline{G})\leq 2+2<6$. For $n\geq 7$, by Lemma
\ref{lem2.6}, we know that $pc(G)\le 3$ and $pc(\overline{G})\le 3$,
and so $pc(G)+pc(\overline{G})\le 6$, and therefore
$pc(G)+pc(\overline{G})<n$ clearly holds.

 Now We consider the case that at least one of $G$ and $\overline{G}$ has cut vertices. Without loss of generality, suppose that $G$ has cut vertices. We distinguish the following three cases.

\textbf{Case 1}. $G$ has a cut vertex $u$ such that $G-u$ has at least three components.

Let $G_1, G_2,\ldots,G_k$ ($k\ge 3$) be the components of $G-u$ and
let $n_i$ be the number of vertices of $G_i$ for $1\le i\le k$ with
$n_1\le n_2\le\ldots \le n_k$. From the definition of
$\overline{G}$, we know that $\overline{G}-u$ contains a spanning
complete $k$-partite graph $K_{n_1,n_2,\ldots, n_k}$. Since
$\Delta(G)\le n-2$, then $n_k\ge 2$. From Corollary \ref{cor2.5},
$pc(\overline{G}-u)=2$, and there exists a $2$-edge-coloring $c$ of
$\overline{G}-u$ that makes it proper connected with the strong
property. Hence $pc(\overline{G})\le 2$ by Corollary \ref{cor2.7}.
Together with the fact that  $pc(G)\le n-3$, we can get the result
$pc(G)+pc(\overline{G})<n$.

\textbf{Case 2}. Each cut vertex $u$ of $G$ satisfies that $G-u$ has only two components.

Let $G_1, G_2$ be the two components of $G-u$, and let $n_i$ be the
number of vertices of $G_i$ for $i=1,2$ with $n_1\le n_2$.

\textbf{Subcase 2.1}. $n_1 \ge 2$.

Then $\overline{G}-u$ contains a spanning $2$-connected bipartite
graph $K_{n_1,n_2}$. From Lemma \ref{lem2.4}, we know that
$pc(\overline{G}-u)=2$ and there exists a $2$-edge-coloring $c$ of
$\overline{G}-u$ that makes it proper connected with the strong
property. So by Corollary \ref{cor2.7}, $pc(\overline{G})\leq 2$. We
can get the result that $pc(G)+pc(\overline{G})<n$.

\textbf{Subcase 2.2}. $n_1=1$, that is, each cut vertex is incident
with a pendent edge.

Let $u_1v_1,u_2v_2,\ldots,u_lv_l$ be the pendent edges of $G$ such
that $v_i$ is the pendent vertices for $1\le i\le l$. The pendent
edges are pairwise disjoint. Let $H$ be the graph obtained from $G$
by deleting all the pendent vertices. Then $H$ must be
$2$-connected. By Lemma \ref{lem2.6}, we know that $pc(H)\le 3$ and
there exists a $3$-edge-coloring $c$ of $\overline{G}-u$ that makes
it proper connected with the strong property.

If $l\ge 2$, we know that $\overline{G}-\{u_1,u_2\}$ contains a
spanning bipartite subgraph  $K_{2,n-4}$ with two parts
$X=\{v_1,v_2\}$ and  $Y=V(G)\setminus \{u_1,v_1,u_2,v_2\}$. Since
$v_1u_2, v_2u_1\notin E(G)$, we know that $v_1u_2, v_2u_1\in
E(\overline{G})$. Then by Lemma \ref{lem2.4} and Corollary
\ref{cor2.7}, we have that $pc(\overline{G})\leq 2$. By using the
fact that $pc(G)\le n-3$, we have that $pc(G)+pc(\overline{G})<n$.

If $l=1$, by Lemma \ref{lem2.6} and Corollary \ref{cor2.7}, one has
that $pc(G)\leq pc(H)\leq 3$. Therefore we have
$pc(G)+pc(\overline{G})\leq n$. Now we prove that the equality
cannot be attained. Note that $d_{\overline{G}}(v_1)=n-2$. We know
that $\overline{G}$ contains $T(2,n-2)$ as a proper spanning
subgraph. Set $N_{\overline{G}}(v_1)= \{x_1,\cdots,
x_{n-2}\}=V(G)\setminus \{u_1, v_1\}$. Without loss of generality,
assume that $x_1u_1\not\in E(G)$. So $x_1u_1\in E(\overline{G})$. If
there is a vertex $x_j$ ($2\le j\le n-2$) that is adjacent to $x_1$
in $\overline{G}$, assume without loss of generality that $x_1x_2\in
E(\overline{G})$. Let $c(v_1x_1)=1,c(x_1x_2)=2,
c(v_1x_2)=c(x_1u_1)=3$ and $c(v_1x_i)=i-2$ for $i=3,4\cdots, n-2$.
One can see that $\overline{G}$ is proper connected. If there is a
vertex $x_j$ ($2\le j\le n-2$) that is adjacent to $u_1$ in
$\overline{G}$, assume without loss of generality that $x_2u_2\in
E(\overline{G})$. Let $c(v_1x_i)=i-2$ for $i=3,4\cdots, n-2$ and
$c(x_1x_1)=c(u_1x_2)=1,c(v_1x_2)=c(x_1u_1)=2$. One can also see that
$\overline{G}$ is proper connected. If there are two vertex $x_j,
x_k$ ($2\le j<k\le n-2$) such that $x_jx_k\in E(\overline{G})$,
without loss of generality, assume that $x_2x_3\in E(\overline{G})$.
Let $c(v_1x_i)=i-2$ for $i=4,\cdots, n-2$, $c(v_1x_1)=c(v_1x_2)=1,
c(v_1x_3)=c(x_1u_1)=2$ and $c(x_2x_3)=3$. We can check that
$\overline{G}$ is proper connected. Hence we have that
$pc(\overline{G})\leq \max\{3,n-4\}$. For $n\geq 7$, we can get that
$pc(G)+pc(\overline{G})\leq 3+n-4=n-1<n$. For $n=6$, as $H$ is a
2-connected graph with 5 vertices, one can see that $H$ contains a
spanning $C_5$ or a spanning $K_{2,3}$. Hence we can easy get that
$pc(G)=pc(H)=2$. So we have $pc(G)+pc(\overline{G})\leq 2+3=5<6$.
Our proof is complete.
\end{proof}

\end{document}